\documentclass[leqno]{article}
\usepackage{graphicx}
\usepackage{amsmath,amstext,amssymb,amsthm}
\usepackage{color}
\setlength{\textwidth}{160mm}
\setlength{\textheight}{21cm}
\setlength{\voffset}{-.2in}
\pagestyle{myheadings}
\setlength{\oddsidemargin}{0.0cm}
\setlength{\evensidemargin}{0.0cm}

\newtheorem{mythm}{Theorem}[section]
\newtheorem{mylem}{Lemma}[section]

\newtheorem{myrem}{Remark}[section]

\newtheorem{mypro}{Proposition}[section]

\vfuzz2pt 
\hfuzz2pt 

\newcommand{\norm}[1]{\left\Vert#1\right\Vert}
\newcommand{\norml}[2]{\left\Vert#1\right\Vert_{L^2(#2)}}

\newcommand{\abs}[1]{\left\vert#1\right\vert}

\newcommand{\set}[1]{\left\{#1\right\}}

\newcommand{\jm}[1]{\left[#1\right]}

\newcommand{\db}{\displaybreak[0]}
\newcommand{\nn}{\nonumber}

\newcommand{\de}{\delta}

\newcommand{\ga}{\gamma}

\newcommand{\Om}{\Omega}

\newcommand{\ta}{\theta}

\newcommand{\dx}{\,\mathrm{d} x}
\newcommand{\ds}{\,\mathrm{d} s}
\newcommand{\M}{\mathcal{M}}

\renewcommand{\i}{{\rm\mathbf i}}


\title{Continuous Interior Penalty Finite Element Method for Helmholtz Equation with High Wave Number: One Dimensional Analysis}
\markboth{L. Zhu, E. Burman \& H. Wu}{CIP-FEM for Helmholtz Equation}

\author{
Lingxue Zhu
\thanks{Department of Mathematics, Nanjing University, Jiangsu,
210093, P.R. China. ({\tt zhulingxue@163.com}).  The work of the first author was
partially supported by  the NSF of China grants 11071116.}
\and
Erik Burman 
\thanks{Department of Mathematics, University of Sussex, Brighton, UK-BN1 9QH, United Kingdom.
({\tt E.N.Burman@sussex.ac.uk}).}
\and
Haijun Wu
\thanks{Department of Mathematics, Nanjing University, Jiangsu,
210093, P.R. China. ({\tt hjw@nju.edu.cn}). The work of the third author was
partially supported by the National Magnetic Confinement Fusion Science Program under grant 2011GB105003 and by the NSF of China grants 10971096, 11071116, 91130004.}
}

\begin{document}
\date{}
\maketitle
\setcounter{page}{1}
\begin{abstract}
This paper addresses the properties of Continuous Interior
Penalty (CIP) finite element solutions for the Helmholtz equation. The
$h$-version of the CIP finite element method with piecewise linear
approximation is applied to a one-dimensional model problem.
We first show discrete well posedness and convergence results, using the
imaginary part of the stabilization operator, for the complex Helmholtz
equation.
Then we consider a method with real valued penalty parameter
and prove an error estimate of the discrete solution in the $H^1$-norm, as the sum of best approximation
plus a pollution term that is the order of the phase difference. It is
proved that the pollution can be eliminated by selecting the penalty parameter
appropriately. As a result of this analysis,
thorough and rigorous understanding of the error behavior throughout the
range of convergence is gained. Numerical results are presented that
show sharpness of the error estimates and highlight some phenomena
of the discrete solution behavior.

\end{abstract}

\large

{\bf Key words.} 
Helmholtz equation, high wave number, pollution, continuous interior
penalty finite element methods, error estimates

{\bf AMS subject classifications. }
65N12, 
65N15, 
65N30, 
78A40  


\section{Introduction}\label{sec-1}
The numerical solution of Helmholtz equation using the finite
element method (FEM) in the medium to high wave number remains a challenge
due to the strong pollution effects that are present in this regime.
It is known that when the standard Galerkin method is used a so
called scale resolution condition must be satisfied (see
\cite{MS11}) in order to achieve a quasi optimality estimate that
is robust in the wave number $k$. Invertibility of the linear system
also holds only under certain conditions on the relation between $k$
and the discretization parameters $h$ and $p$. This in particular
imposes the use of high order finite elements and seems to exclude
the possibility of using the simplest choice of piecewise affine
elements. In this latter case the standard Galerkin finite element
method has to be modified in order to obtain an efficient method.
Such modifications often takes the form of least squares terms
giving additional control of certain residual quantities, either in
the element or on element faces. For low order finite elements there
are a number of works on stabilized methods, typically using
Galerkin least squares approaches and some results on the effect of
the stabilization on the dispersion error exist in the one
dimensional case, see \cite{I.H.T}, or for an early example of the
use of face based residuals see \cite{O.P.}. Another possibility
is to use discontinuous Galerkin methods and in this framework it
has been proven by Feng and Wu \cite{F.W.} that provided a penalty
on the jumps of derivatives over element faces is added to the
formulation the linear system is always invertible. Similar results
were obtained using the continuous interior penalty method in a
recent work by Wu \cite{H.Wu} and numerical investigations showed
that the pollution error could be greatly reduced by choosing the
stabilization parameter appropriately. For wave-number-explicit error analyses of other methods including spectral methods and discontinuous Petrov-Galerkin methods, we refer to \cite{sw07,dpg11}.

In the present work we continue the investigations initiated in
\cite{H.Wu}, this time focusing on the one dimensional case and the
effect of the penalty operator on the errors in amplitude and phase.
Throughout the paper, $C$ is used to denote a generic positive constant which is independent of $k$, $h$, $f$. $C$ may have different values in different occurrences. We also use the shorthand notation $A \lesssim B$ and $B \lesssim A$ for the inequality $A \leq CB$ and $B \leq CA $. $A\simeq B$ is for the statement $A \lesssim B$ and $B \lesssim A$.
First we will
give alternative proofs of some of the results given in \cite{H.Wu},
showing for methods using a stabilization parameter with non-zero
imaginary part the linear system is always well posed and the
following error estimate holds
\[
\| (u - u_h)'\|\lesssim (kh +\min(1, k^3h^2))  \|f\|,
\]
where $\|\cdot\|$ denotes the $L^2$-norm. Then we consider the case
when the stabilization parameter is real and by constructing the
discrete Green's function we derive an error estimate where the error
is written as the sum of the best approximation error and a term
proportional to the phase error. We prove a relation between the phase
error and the stabilization parameter and show that for a particular
range of values for the stabilization parameter, under a mild
condition on the computational mesh, the pollution error is
eliminated, leading to the optimal error estimate
\[
\| (u - u_h)'\|\lesssim kh  \|f\|.
\]
These results are finally verified computationally in several numerical examples. 

This paper is organized as follows. In Section~2 we study the one-dimensional model problem and introduce the CIP-FEM. Pre-asymptotic error estimates in $H^1$- and $L^2$-norms are derived in Section~3 for any $k>0$, $h>0$ and imaginary penalty parameters. In Section~4, we consider the dispersion analysis of the CIP method and obtain the phase error estimates between the wave number $k$ of the continuous problem and some discrete wave number $k_h^-$ for different real penalty parameters. The discrete global system was solved explicitly in Section~5 via the theory of fundamental system, it plays a major part in the stability and pre-asymptotic error analysis. In Section~6, the stability and error estimates are proved directly and we can choose appropriate penalty parameter to eliminate the pollution effect in this section. Extensive numerical tests are given in Section~7 to show some phenomena of the discrete solution behavior and verify the theoretical findings, and we come to the conclusion in Section~8.
\section{The model problem and its discretization}\label{sec-2}

\subsection{The Boundary Value Problem}
Let $\Om=(0,1)$ and let on $\bar \Om$ the boundary value problem (BVP) $Lu=-f$ on be given:
\begin{align}
         u''(x)+k^2u(x) &=- f(x),\quad  x\in\Omega\label{e2.1a}\\
                  u(0)&=0,                      \label{e2.1b}  \\
                  u'(1)-\i ku(1)&=0,\label{e2.1c}
\end{align}
where, for simplicity, $f(x)\in L^2(\Omega)$ and $k$ is known as the wave number. We assume that $k\gg1$ since we are considering high-frequency problems.\\

\textbf{Notation}\\

 By $L^2(\Omega):=H^0(\Omega)$, we denote the space of all square-integrable complex-valued functions equipped with the inner
product
\[(v,w):=\int_\Om v(x)\bar w(x)\dx \text { and the norm } \norm{w}:=\sqrt{(w,w)}.\]
We use the notation $H^s(\Om)$ the Sobolev spaces of (integer) order $s$ in the usual sense.
Let $\norm{\cdot}_s$ and $\abs{\cdot}_s$ denote the usual full norm and seminorm on $H^s$, respectively.\\

\textbf{Existence and Uniqueness in $H^2(0,1)$}\\

The BVP \eqref{e2.1a}-\eqref{e2.1c} has a unique solution in the space $H^2(0,1)$. For the proof see, e.g., \cite{A.K.A}. The existence of the solution is concluded from the following construction.\\

\textbf{Inverse Operator}\\

The Green's function of the BVP \eqref{e2.1a}--\eqref{e2.1c} is
\begin{equation}\label{eG}
G(x,s)=\frac{1}{k}\left\{ \begin{aligned}
\sin{kx}\,e^{\i ks}, \quad 0\leq x\leq s, \\
\sin{ks}\,e^{\i kx}, \quad s\leq x\leq 1.
\end{aligned} \right.
\end{equation}
The solution $u(x)$ of \eqref{e2.1a}--\eqref{e2.1c} exists for all $k>0$ and can be written as
\begin{equation}\label{eu}
    u(x)=\int_0^1G(x,s)f(s)\ds,
\end{equation}
and we have,
\begin{equation}\label{eu'H}
    u'(x)=\int_0^1H(x,s)f(s)\ds \quad\text{ where }H(x,s)=\left\{
\begin{aligned}
&\cos{kx}\,e^{\i ks},&\quad 0\leq x < s, \\
&\i\sin{ks}\,e^{\i kx},&\quad s< x\leq 1.
\end{aligned}\right.
\end{equation}

\begin{mylem}\label{l2.1}
The BVP \eqref{e2.1a}--\eqref{e2.1c} has a unique solution in
$H^2(0,1)$ and for $f\in L^2(0,1)$
\begin{align}
    \norm{u}\le&  k^{-1}\norm{f},\label{e2.2a}\\
    \abs{u}_1\le&  \norm{f},\label{e2.2b}\\
    \abs{u}_2\le & (1+k)\norm{f}.\label{e2.2c}
\end{align}
\end{mylem}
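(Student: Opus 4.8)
The plan is to prove all three estimates directly from the explicit representation formulas \eqref{eu} and \eqref{eu'H}, using that the kernels $G$ and $H$ admit $k$-uniform pointwise bounds, and then to recover the $H^2$-seminorm estimate \eqref{e2.2c} as an immediate consequence of \eqref{e2.2a} together with the equation \eqref{e2.1a}. No delicate energy argument is needed: everything follows from the Green's function that has already been exhibited.

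First I would record the pointwise bounds on the kernels. From \eqref{eG}, since $\abs{\sin(kx)}\le 1$ and $\abs{e^{\i ks}}=1$, the Green's kernel satisfies $\abs{G(x,s)}\le k^{-1}$ for all $(x,s)\in\bar\Om\times\bar\Om$. Substituting this into \eqref{eu} and applying Cauchy--Schwarz on $\Om=(0,1)$, which has unit length, gives for every $x$
\[
\abs{u(x)}\le k^{-1}\int_0^1\abs{f(s)}\ds\le k^{-1}\norm{f},
\]
so $\norm{u}_{L^\infty(\Om)}\le k^{-1}\norm{f}$. Because $\abs{\Om}=1$ we have $\norm{u}\le\norm{u}_{L^\infty(\Om)}$, which yields \eqref{e2.2a}.

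For \eqref{e2.2b} I would argue identically with the kernel $H$ from \eqref{eu'H}: since $\abs{\cos(kx)}\le 1$ and $\abs{\i\sin(ks)}\le 1$, one has $\abs{H(x,s)}\le 1$ everywhere, whence $\abs{u'(x)}\le\int_0^1\abs{f(s)}\ds\le\norm{f}$, giving $\abs{u}_1=\norm{u'}\le\norm{u'}_{L^\infty(\Om)}\le\norm{f}$. The last estimate \eqref{e2.2c} then requires no further integral bound: rewriting \eqref{e2.1a} as $u''=-f-k^2u$ and taking $L^2$-norms gives $\abs{u}_2=\norm{u''}\le\norm{f}+k^2\norm{u}$, and inserting \eqref{e2.2a} produces $\abs{u}_2\le\norm{f}+k^2\cdot k^{-1}\norm{f}=(1+k)\norm{f}$.

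I do not expect a genuine obstacle, as the argument is a direct consequence of the representation formulas, which I may use since they are stated in the excerpt (and whose validity rests on the existence and uniqueness result cited there). The only points deserving care are the elementary passage from the $L^\infty$-bound to the $L^2$-bound, which is valid precisely because $\abs{\Om}=1$, and the observation that the pointwise bounds on $G$ and $H$ are independent of $k$; it is exactly this uniformity that makes the $k$-dependence of the constants in \eqref{e2.2a}--\eqref{e2.2c} sharp.
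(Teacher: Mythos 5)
Your proposal is correct, and every step checks out: the pointwise bounds $\abs{G(x,s)}\le k^{-1}$ and $\abs{H(x,s)}\le 1$ are immediate from \eqref{eG} and \eqref{eu'H}, the passage $\norm{\cdot}_{L^2}\le\norm{\cdot}_{L^\infty}$ and $\norm{f}_{L^1}\le\norm{f}$ is legitimate because $\abs{\Om}=1$, and the bound $\abs{u}_2\le\norm{f}+k^2\norm{u}$ from the equation itself delivers \eqref{e2.2c} with exactly the stated constant. The only point of comparison is that the paper does not actually prove this lemma: its ``proof'' is a one-line citation to Douglas \emph{et al.}\ \cite{J.D.J}, with existence and uniqueness separately deferred to \cite{A.K.A}. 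Your argument is therefore a genuinely self-contained replacement, and a natural one, since it uses nothing beyond the representation formulas \eqref{eu} and \eqref{eu'H} that the paper itself displays immediately before the lemma; it also recovers the constants $k^{-1}$, $1$, and $1+k$ exactly rather than up to a generic constant. You correctly flag that uniqueness (and the validity of the representation, i.e.\ that the function defined by \eqref{eu} is the $H^2$ solution) still rests on the cited existence--uniqueness result, which matches how the paper structures the material, so nothing is missing. What the citation-based route buys the authors is brevity; what your route buys is transparency about why the constants are $k$-uniform in precisely the stated way, which is the content that actually matters for the rest of the paper.
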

\begin{proof}
See Douglas \textit{et al.} \cite{J.D.J}.
\end{proof}
\begin{myrem}
The aforementioned results are valid also for the adjoint problem \eqref{e2.1a}, \eqref{e2.1b} and
$ u'(1)+\i k u(1)=0.$
\end{myrem}
\subsection{The Continuous Interior Penalty method}
Let $\M_h$ be  a uniform mesh on $\bar{\Omega}$ that consists of $n$
sub-intervals $K_j=(x_{j-1}, x_j)$, $1\leq j \leq n$, where
$x_j=j/n$. Note that $x_j,1\le j\le n-1$ are interior nodes and
$x_0$ is the Dirichlet boundary node. The stepsize is $h = 1/n$. For
the ease of presentation, we assume that $k$ is constant on $\Om$.

For any function $v$, denote by $v_j^+=v(x_j+)$ and $v_j^-=v(x_j-)$
if the one-sided limits exist. We also define the jump $\jm{v}_j$ of
$v$ at a node $x_j$ as
\[\jm{v}_j:= v_j^--v_j^+, \qquad 1\le j\le n-1.\]
Now we define the ``energy'' space $V$ and the sesquilinear form $a_h(\cdot,\cdot)$ on $V \times
V$ as follows:
$$V:=\{v\in H^1(\Om)\wedge v(0)=0\}\cap \prod_{K_j \in \M_h}H^2(K_j),\ \ j=1,2,\cdots,n,$$
\begin{equation}\label{eah}
a_h(u,v):=(u',v')-k^2(u,v)-\i k u(1)\bar{v}(1)+J(u,v)\ \ \ \forall u,v \in V,
\end{equation}
where
\begin{equation}\label{eJ}
J(u,v):=\sum^{n-1}_{j=1}  \ga h[u']_j[\bar{v}']_j + \ga h (u'(1)
- \i k u(1)) (\bar v'(1)
- \i k \bar v(1))
\end{equation}
and $\ga:= \ga_{\tt Re} + i \ga_{\tt Im}$ is a complex number.
\begin{myrem}
(a) The terms in $J(u,v)$ are so-called penalty terms. The penalty
parameter in $J(u,v)$ is $\ga$.

(b) Penalizing the jumps of normal derivatives was used early by
Douglas and Dupont \cite{J.D.J.T} for second order PDEs and by
Babu\v{s}ka and Zl\'{a}mal \cite{I.B.M} for fourth order PDEs in the
context of $C^0$ finite element methods, by Baker \cite{G.A.B} for
fourth order PDEs and by Arnold \cite{D.A} for second order
parabolic PDEs in the context of IPDG methods. More recently it has
been proposed and analysed for fourth order PDEs by Hughes et al \cite{E.G.H.L.M.T}
and for singularly perturbed elliptic or parabolic problems by Burman
and co-workers \cite{Bu05,B.F,B.H}.

(c) Notice that we here add a least squares penalty on the boundary
condition as well. This enhances the continuity of the bilinear form
and appears to be necessary for the a priori error estimate proposed below.
\end{myrem}

It is clear that $J(u,v)=0$ if $u \in H^2(\Om)$ is the solution of \eqref{e2.1a}-\eqref{e2.1c} and $v\in V$. Therefore,
\begin{equation}
a_h(u,v)=(f,v), \ \ \ \forall v\in V.
\end{equation}
Let $V_h$ be the linear finite element space, that is,
\[V_h:=\set{v_h\in H^1(\Om):\; v_h(0)=0, v_h|_{K_j}\text{ is a linear polynomial, } j=1,\cdots,n}.\]
Then our CIP-FEMs are defined as follows: Find $u_h\in V_h$ such that
\begin{equation}\label{eipdg}
a_h(u_h,v_h)=(f,v_h)\qquad \forall v_h\in V_h.
\end{equation}
We remark that if the parameter $\ga\equiv 0$, then the above CIP-FEM becomes the standard FEM.

The following semi-norm on the space $V$ is useful for the subsequent analysis:
\begin{align}
   \norm{v}_{1,h}:=\Big(\|v'\|^2+\sum_{j=1}^{n-1}|\ga| h\abs{[v']_j}^2\Big)^{\frac12}. \label{enorm2}
\end{align}
\section{A priori error estimate for the model
  problem}
In this section we will use techniques similar to those developed in \cite{Bu05} to
derive an a priori error estimate that holds without any conditions on
the mesh parameter and the wave number. We present the analysis in the one dimensional case, but the extension to higher dimensions is straightforward. The key observations are
\begin{enumerate}
\item  if the complex component of
the stabilization coefficient is strictly negative (or positive
depending on the sign of the boundary condition), the formulation is
coercive on the stabilization;
\item if the $L^2$-projection is
used for interpolation in the analysis, the zeroth order term vanishes
and the bilinear form $a_h(\cdot,\cdot)$ has enhanced continuity
properties.
\end{enumerate}
These two observations lead to an a priori error estimate on the
stabilization operator that is optimal in $h$. An energy norm approach
combined with a duality argument is
then used to derive an a priori error
estimate of the error in the energy norm. To simplify the notation in
this section we assume that $\ga :=\i \ga_{\tt Im}$ the extension to
non-zero real part is straightforward.

Let $\pi_h:L^2(\Omega) \mapsto V_h$ be the standard $L^2$-projection
on $V_h$. It is straightforward to show that
\begin{equation}\label{L2err1}
\|u - \pi_h u\| + h \|\nabla (u - \pi_h u)\| \lesssim h^2 |u|_2
\end{equation}
and
\begin{equation}\label{L2err2}
|J(u -\pi_h u,u-\pi_h u)|^{\frac{1}{2}}
 \lesssim |\ga|^{\frac{1}{2}}(1+kh) h |u|_2,\ \ \Big(h^{-1}\sum\limits_{j=1}^{n}|(u-\pi_h u
 )(x_j)|^2\Big)^{\frac{1}{2}}\lesssim h|u|_2.
\end{equation}
In the following we will assume that $kh \lesssim 1$ and neglect high
order contributions in $kh$ in the above approximation estimates. We
first prove the continuity of $a_h(\cdot,\cdot)$ on the space
orthogonal to $V_h$. Let
\[
V^\perp := \{v \in V: (v,w_h) = 0, \forall w_h \in V_h\}.
\]
\begin{mylem}\label{perpcont}
For all $v \in V^\perp$ and all $w_h \in V_h$ there holds
\[
|a_h(v,w_h)| \lesssim
\Big(|J(v,v)|^{\frac{1}{2}}+|\ga|^{-\frac{1}{2}}\Big(\sum\limits_{j=1}^n
h^{-1}|v(x_j)|^2\Big)^{\frac{1}{2}}\Big)|J(w_h, w_h)|^{\frac{1}{2}}.
\]
\end{mylem}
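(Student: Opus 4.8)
The plan is to lean on the two structural features emphasized just before the lemma, and to reduce everything to a discrete Cauchy--Schwarz inequality over the nodes. First I would use $v\in V^\perp$ to kill the zeroth--order term: since $(v,w_h)=0$, the definition \eqref{eah} collapses to
\[
a_h(v,w_h)=(v',w_h')-\i k\,v(1)\overline{w_h(1)}+J(v,w_h),
\]
so no factor of $k^2$ survives and only a first--order term, the Robin boundary contribution, and the penalty remain. This is precisely observation~(2): orthogonality against $V_h$ removes the dangerous mass term before any norm is invoked.

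Next I would integrate $(v',w_h')$ by parts elementwise. Because $w_h\in V_h$ is piecewise linear, $w_h'$ is constant on each $K_j$, and since $v\in H^1(\Om)$ is continuous with $v(0)=0$, a summation by parts yields the identity
\[
(v',w_h')=v(1)\,\overline{w_h'(1)}+\sum_{j=1}^{n-1}v(x_j)\,\overline{[w_h']_j}.
\]
This is the key algebraic step: the volume term is traded for nodal values of $v$ paired with jumps of $w_h'$ at the interior nodes, plus one boundary term at $x=1$. Substituting back, $a_h(v,w_h)$ splits into three pieces, namely (i) $J(v,w_h)$, (ii) the interior sum $\sum_{j=1}^{n-1}v(x_j)\overline{[w_h']_j}$, and (iii) the boundary term $v(1)\big(\overline{w_h'(1)}-\i k\,\overline{w_h(1)}\big)$.

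I would then estimate the first two pieces by discrete Cauchy--Schwarz over the nodes. Piece (i) is immediate from sesquilinearity of $J$, giving $|J(v,w_h)|\le|J(v,v)|^{\frac12}|J(w_h,w_h)|^{\frac12}$. For piece (ii), factoring $(|\ga|h)^{\frac12}$ out of the $w_h'$--jumps (which are controlled by $|J(w_h,w_h)|^{\frac12}$) and $h^{\frac12}$ out of the nodal values gives
\[
\Big|\sum_{j=1}^{n-1}v(x_j)\overline{[w_h']_j}\Big|\lesssim h^{\frac12}\Big(\sum_{j=1}^n h^{-1}|v(x_j)|^2\Big)^{\frac12}(|\ga|h)^{-\frac12}|J(w_h,w_h)|^{\frac12},
\]
and the powers of $h$ combine to produce exactly the claimed factor $|\ga|^{-\frac12}\big(\sum_j h^{-1}|v(x_j)|^2\big)^{\frac12}$.

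The delicate part, and what I expect to be the main obstacle, is the boundary term (iii). Here $|v(1)|$ is again bounded by $h^{\frac12}\big(\sum_j h^{-1}|v(x_j)|^2\big)^{\frac12}$, so it suffices to control the residual factor $\overline{w_h'(1)}-\i k\,\overline{w_h(1)}$ by $(|\ga|h)^{-\frac12}|J(w_h,w_h)|^{\frac12}$. This is exactly the purpose of the least--squares penalty on the boundary condition built into $J$ (cf.\ Remark~2.2(c)): that penalty supplies $|\ga|h$ times the squared boundary residual at $x=1$, so the boundary contribution is absorbed into $|J(w_h,w_h)|^{\frac12}$. The care needed is to align the conjugations and the signs of the $\i k$--terms coming from the form's Robin contribution $-\i k\,v(1)\overline{w_h(1)}$ with those in the penalty, using the standing assumption $kh\lesssim1$ to discard the lower--order pieces; once this matching is verified, adding the three bounds gives the asserted estimate.
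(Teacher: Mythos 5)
Your argument is exactly the paper's: use $(v,w_h)=0$ to drop the mass term, integrate $(v',w_h')$ by parts against the piecewise\-/constant $w_h'$ to get $\sum_{j=1}^{n-1}v(x_j)\overline{[w_h']_j}+v(1)\bigl(\overline{w_h}'(1)-\i k\,\overline{w_h}(1)\bigr)+J(v,w_h)$, and conclude by Cauchy--Schwarz, with the boundary residual absorbed by the least-squares boundary penalty in $J$ and the nodal sum paired against the jump part of $J(w_h,w_h)$. You merely spell out the final Cauchy--Schwarz step, which the paper leaves as a one-line remark.
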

\begin{proof}
The proof follows by observing that
\[
a_h(v,w_h) = (v',w_h') - \i k v(1) \overline{w_h}(1) + J(v,w_h).
\]
Noting that $w_h$ is piecewise linear and after an integration by parts in the first term in the right hand side
we have
\[
a_h(v,w_h) = \sum_{j=1}^{n-1} v(x_j) [\overline{w_h}']_j +   v(1) (-\i k \overline{w_h}(1)+\overline{w_h}'(1))
+ J(v,w_h).
\]
We conclude by applying the Cauchy-Schwarz inequality.
\end{proof}

For the stabilization operator $J(\cdot,\cdot)$ we have the
following stability estimate.
\begin{mylem}\label{Jstab}
Assume that $\gamma_{\tt Im} < 0$. For all $v_h \in V_h$ there holds
\[
|J(v_h, v_h)| + k|v_h(1)|^2 = -{\textnormal {Im}}[a_h(v_h,v_h)]
\]
and for $u_h$ solution to \eqref{eipdg} then
\[
| J(u_h, u_h)| + k|u_h(1)|^2 = -{\textnormal {Im}}[(f,u_h)].
\]
\end{mylem}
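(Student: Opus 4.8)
The plan is to isolate the imaginary part of the energy expression $a_h(v_h,v_h)$ and simply read off the identity, exploiting the standing assumption $\ga=\i\ga_{\tt Im}$ of this section. First I would expand the definition \eqref{eah} with $u=v=v_h$ to get
\[
a_h(v_h,v_h)=\norm{v_h'}^2-k^2\norm{v_h}^2-\i k\abs{v_h(1)}^2+J(v_h,v_h).
\]
The two terms $\norm{v_h'}^2$ and $k^2\norm{v_h}^2$ are real and hence drop out of the imaginary part, while the boundary term is purely imaginary with $\im[-\i k\abs{v_h(1)}^2]=-k\abs{v_h(1)}^2$.

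The crux is the treatment of $J(v_h,v_h)$. Since $[\bar{v}_h']_j=\overline{[v_h']_j}$, every product appearing in \eqref{eJ} is a squared modulus, so with $\ga=\i\ga_{\tt Im}$ one obtains
\[
J(v_h,v_h)=\i\ga_{\tt Im}\,h\Big(\sum_{j=1}^{n-1}\abs{[v_h']_j}^2+\abs{v_h'(1)-\i k v_h(1)}^2\Big).
\]
This is a nonnegative real multiple of $\i\ga_{\tt Im}$, hence purely imaginary, so $\im[J(v_h,v_h)]=\ga_{\tt Im}h(\cdots)$. Because $\ga_{\tt Im}<0$ this imaginary part is nonpositive and its absolute value equals $\abs{J(v_h,v_h)}$, i.e.\ $\im[J(v_h,v_h)]=-\abs{J(v_h,v_h)}$. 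Collecting the two surviving contributions gives $\im[a_h(v_h,v_h)]=-k\abs{v_h(1)}^2-\abs{J(v_h,v_h)}$, which is the first identity after multiplication by $-1$.

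For the second identity I would take $v_h=u_h$ in the Galerkin problem \eqref{eipdg}, so that $a_h(u_h,u_h)=(f,u_h)$, and substitute into the first identity with $v_h=u_h$ to reach $\abs{J(u_h,u_h)}+k\abs{u_h(1)}^2=-\im[(f,u_h)]$. There is no real analytic obstacle here; the statement is essentially an algebraic identity obtained by separating the imaginary part of the standard energy computation. The only point requiring care is the sign bookkeeping: one must verify both that $J(v_h,v_h)$ is genuinely purely imaginary (so that $\abs{\cdot}$ and $-\im[\cdot]$ agree under the hypothesis $\ga_{\tt Im}<0$) and that the boundary term contributes $-k\abs{v_h(1)}^2$, rather than $+k\abs{v_h(1)}^2$, to the imaginary part.
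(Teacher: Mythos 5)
Your proof is correct and is precisely the computation behind the paper's one-line proof (``immediate by the definition of $a_h(\cdot,\cdot)$ and \eqref{eipdg}''): the real terms drop out, the boundary term contributes $-k|v_h(1)|^2$ to the imaginary part, $J(v_h,v_h)$ is $\i\ga_{\tt Im}$ times a nonnegative real so that $\mathrm{Im}[J(v_h,v_h)]=-|J(v_h,v_h)|$ when $\ga_{\tt Im}<0$, and the second identity follows by testing \eqref{eipdg} with $u_h$. The one point worth flagging is that your formula $J(v_h,v_h)=\i\ga_{\tt Im}h\big(\sum_{j}|[v_h']_j|^2+|v_h'(1)-\i k v_h(1)|^2\big)$ relies on reading the boundary penalty in \eqref{eJ} as a genuine least-squares term, i.e.\ with second factor $\overline{v'(1)-\i k v(1)}=\bar v'(1)+\i k\bar v(1)$ rather than the literally printed $\bar v'(1)-\i k\bar v(1)$; that is clearly the intended reading (cf.\ the remark describing it as a least-squares penalty on the boundary condition), and it is the reading under which the stated identity actually holds.
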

\begin{proof}
Immediate by the definition of $a_h(\cdot,\cdot)$ and \eqref{eipdg}.
\end{proof}
\begin{myrem}\label{existsol}
For all $\gamma_{\tt Im}<0$ Lemma \ref{Jstab}, implies existence of a
unique discrete solution, since $|J(v_h, v_h)| + k|v_h(1)|^2$ is
a norm on $V_h$.
\end{myrem}

Combining the two previous results with the consistency of the formulation and the regularity estimate \eqref{e2.2c} immediately
gives us a convergence estimate for the penalty term $J(\cdot,\cdot)$
and the error in the right end point.
\begin{mypro}\label{Jconv}
Let $u\in H^2(\Omega)$ be the solution of \eqref{e2.1a}-\eqref{e2.1c} and $u_h \in V_h$ be
the solution of \eqref{eipdg}. Then there holds
\[
|J(u-u_h, u - u_h)|^{\frac{1}{2}} + k^{\frac{1}{2}}|(u - u_h)(1)|
\lesssim
  \big(|\ga|^{\frac{1}{2}}(1+ kh)+|\ga|^{-\frac{1}{2}}\big) k h \|f\|.
\]
\end{mypro}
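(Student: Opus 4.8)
The plan is to split the error through the $L^2$-projection so that the projection error lands in $V^\perp$, where the enhanced continuity of Lemma~\ref{perpcont} is available. Write $e := u - u_h = \eta - \xi$, with $\eta := u - \pi_h u \in V^\perp$ and $\xi := u_h - \pi_h u \in V_h$ (the discrete solution $u_h$ exists by Remark~\ref{existsol} since $\ga_{\tt Im}<0$). Because $u$ solves \eqref{e2.1a}--\eqref{e2.1c} and $u_h$ solves \eqref{eipdg}, consistency yields the Galerkin orthogonality $a_h(e,v_h)=0$ for every $v_h\in V_h$; choosing $v_h=\xi$ gives $a_h(\xi,\xi)=a_h(\eta,\xi)$.

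Next I would turn the left-hand side into the target quantity via Lemma~\ref{Jstab}, namely $|J(\xi,\xi)|+k|\xi(1)|^2 = -\im[a_h(\xi,\xi)] \le |a_h(\eta,\xi)|$. Here the choice $\eta\in V^\perp$ is exactly what kills the zeroth-order term $-k^2(\eta,\xi)$, so Lemma~\ref{perpcont} applies to the right-hand side and delivers $|a_h(\eta,\xi)| \lesssim B\,|J(\xi,\xi)|^{1/2}$, where $B := |J(\eta,\eta)|^{1/2} + |\ga|^{-1/2}\big(\sum_{j=1}^n h^{-1}|\eta(x_j)|^2\big)^{1/2}$. Setting $a:=|J(\xi,\xi)|^{1/2}$, the inequality $a^2 + k|\xi(1)|^2 \lesssim B\,a$ then yields both estimates I need for the discrete part: dividing by $a$ gives $|J(\xi,\xi)|^{1/2}\lesssim B$, and reinserting this gives $k^{1/2}|\xi(1)|\lesssim B$.

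It then remains to bound $B$ and reassemble by the triangle inequality. The two approximation bounds in \eqref{L2err2} give $|J(\eta,\eta)|^{1/2}\lesssim |\ga|^{1/2}(1+kh)h\abs{u}_2$ and $\big(h^{-1}\sum_j|\eta(x_j)|^2\big)^{1/2}\lesssim h\abs{u}_2$, hence $B \lesssim \big(|\ga|^{1/2}(1+kh)+|\ga|^{-1/2}\big)h\abs{u}_2$. Since $|J(\cdot,\cdot)|^{1/2}$ is a seminorm (here $\ga=\i\ga_{\tt Im}$), I would write $|J(e,e)|^{1/2}+k^{1/2}|e(1)| \le \big(|J(\eta,\eta)|^{1/2}+|J(\xi,\xi)|^{1/2}\big)+k^{1/2}\big(|\eta(1)|+|\xi(1)|\big)$. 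The $\xi$-contributions and $|J(\eta,\eta)|^{1/2}$ are all $\lesssim B$, while the nodal term obeys $k^{1/2}|\eta(1)|\lesssim (kh)^{1/2}h\abs{u}_2\lesssim h\abs{u}_2\lesssim B$, using $kh\lesssim 1$ and $|\ga|^{1/2}+|\ga|^{-1/2}\gtrsim 1$. Finally, inserting the regularity estimate \eqref{e2.2c} as $\abs{u}_2\le(1+k)\norm{f}\lesssim k\norm{f}$ converts the factor $h\abs{u}_2$ into $kh\norm{f}$ and produces the claimed bound.

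Since every ingredient is already in place, I do not anticipate a genuine obstacle; the only points requiring care are bookkeeping. One must make sure that it is $|J(\xi,\xi)|^{1/2}$, and not the full coercive quantity $|J(\xi,\xi)|+k|\xi(1)|^2$, that carries the factor on the right of Lemma~\ref{perpcont}, so that the division by $a$ is legitimate; and one must verify that $k^{1/2}|\eta(1)|$ is truly absorbed into $B$ through $kh\lesssim 1$ rather than smuggling in an unwanted extra power of $k$.
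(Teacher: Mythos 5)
Your proposal is correct and follows essentially the same route as the paper: the $L^2$-projection splitting $u-u_h=\eta-\xi$, Lemma~\ref{Jstab} plus Galerkin orthogonality to reduce to $|a_h(\eta,\xi)|$, the enhanced continuity of Lemma~\ref{perpcont}, the division by $|J(\xi,\xi)|^{1/2}$, and the approximation bounds \eqref{L2err2} with the regularity estimate \eqref{e2.2c}. Your extra care in absorbing $k^{1/2}|\eta(1)|$ via $kh\lesssim 1$ is a detail the paper leaves implicit, but it is not a different argument.
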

\begin{proof}
Let $u - u_h = \eta - \xi_h$ with $\eta = u - \pi_h u$ and $\xi_h =
u_h - \pi_h u$. By the triangle inequality and the error estimate
\eqref{L2err2} it is enough to consider
$$
|J(\xi_h,  \xi_h)|^{\frac{1}{2}}+ k^{\frac{1}{2}}|\xi_h(1)|.
$$
Using Lemma \ref{Jstab} followed by the consistency we have
\[
|J(\xi_h,\xi_h)| + k|\xi_h(1)|^2 =  -{\textnormal {Im}}[a_h(\xi_h, \xi_h)]=-{\textnormal {Im}}[a_h(\eta, \xi_h)]\le |a_h(\eta,\xi_h)|.
\]
We then apply the continuity of Lemma \ref{perpcont} to bound the
right hand side,
\[| J(\xi_h,\xi_h) |+ k|\xi_h(1)|^2 \lesssim
 \Big(|J(\eta,\eta)|^{\frac{1}{2}}+|\ga|^{-\frac{1}{2}}\Big(\sum\limits_{j=1}^n
h^{-1}|\eta(x_j)|^2\Big)^{\frac{1}{2}}\Big) |J(\xi_h,
\xi_h)|^{\frac12}.\] Hence,
\begin{equation}\label{aa7}
 |J(\xi_h, \xi_h)|^{\frac{1}{2}} +
k^{\frac{1}{2}}|\xi_h(1)| \lesssim
 |J(\eta,\eta)|^{\frac{1}{2}}+|\ga|^{-\frac{1}{2}}\Big(\sum\limits_{j=1}^n
h^{-1}|\eta(x_j)|^2\Big)^{\frac{1}{2}},
\end{equation}
then the claim follows by applying once again \eqref{L2err2}. The
proof is completed.
\end{proof}

After these preliminary results we are in a position to prove the main
result of this section.
\begin{mythm}(A priori error estimates)\label{imge th}\\
Let $u\in H^2(\Omega)$ be the solution of
\eqref{e2.1a}-\eqref{e2.1c} and $u_h \in V_h$ the solution of
\eqref{eipdg}, with $\gamma_{\tt Im}<0$. Then, if $h$ is small such
that $k h \lesssim 1$ for all $h>0$ and $k\geq 1$, there
holds
\[
\|k(u - u_h)\| \lesssim (|\ga|+|\ga|^{-1}) \min(1,k^3 h^2) \|f\|
\]
and
\[
\|(u - u_h)'\| \lesssim(|\ga|+|\ga|^{-1}) \big(kh + \min(1,k^3 h^2)\big)
\|f\|.
\]
\end{mythm}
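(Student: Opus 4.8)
The plan is to prove the two bounds in turn: the weighted $L^2$-estimate on $\norm{k(u-u_h)}$ by a duality (Aubin--Nitsche) argument, and then the $H^1$-estimate on $\norm{(u-u_h)'}$ by an energy argument that takes the first bound as an input. Throughout write $e:=u-u_h$ and split it as $e=\eta-\xi_h$ with $\eta:=u-\pi_h u\in V^\perp$ and $\xi_h:=u_h-\pi_h u\in V_h$, and use the Galerkin orthogonality $a_h(e,v_h)=0$ for all $v_h\in V_h$ that follows from consistency and \eqref{eipdg}.

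For the first estimate I would introduce the adjoint solution $z\in H^2(\Om)$ of $-z''-k^2z=k^2e$ with $z(0)=0$ and the adjoint condition $z'(1)+\i kz(1)=0$. Since $z$ is smooth and satisfies this condition, every penalty contribution carrying $z$ vanishes, so $J(e,z)=0$, and a global integration by parts together with the adjoint equation yields the representation $\norm{ke}^2=a_h(e,z)$. Galerkin orthogonality and $\pi_h z\in V_h$ then give $\norm{ke}^2=a_h(e,\psi)$ with $\psi:=z-\pi_h z\in V^\perp$. Integrating by parts element-wise, using $e''=u''=-f-k^2u$ on each $K_j$, the continuity of $\psi$, $\psi(0)=0$, and $(u_h,\psi)=0$, collapses $a_h(e,\psi)$ into a nodal-jump sum $\sum_j[e']_j\overline{\psi}(x_j)$, a right-endpoint term proportional to $e'(1)-\i ke(1)$, the volume term $(f,\psi)$, and $J(e,\psi)$. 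Each $e$-factor is controlled by Proposition~\ref{Jconv} (which bounds $\abs{J(e,e)}^{1/2}$ and $k^{1/2}\abs{e(1)}$, whence also $\abs{e'(1)-\i ke(1)}\lesssim\abs{\ga}^{-1/2}h^{-1/2}\abs{J(e,e)}^{1/2}$ through the boundary penalty), and each $\psi$-factor by the interpolation bounds \eqref{L2err1}--\eqref{L2err2} applied to $z$. Feeding in the adjoint regularity $\abs{z}_2\lesssim(1+k)\norm{k^2e}\lesssim k^2\norm{ke}$ from Lemma~\ref{l2.1} (valid for the adjoint by the Remark following it), the dominant nodal term becomes $\abs{\ga}^{-1/2}\abs{J(e,e)}^{1/2}\,h\,\abs{z}_2\lesssim(\abs{\ga}+\abs{\ga}^{-1})k^3h^2\norm{f}\,\norm{ke}$, and all remaining terms are of the same or lower order in $k$ and $h$. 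Dividing by $\norm{ke}$ gives $\norm{ke}\lesssim(\abs{\ga}+\abs{\ga}^{-1})k^3h^2\norm{f}$; combined with the unconditional stability bound $\norm{ke}\lesssim(\abs{\ga}+\abs{\ga}^{-1})\norm{f}$, which follows from the discrete stability of the scheme, this produces the $\min(1,k^3h^2)$ factor.

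For the $H^1$-estimate I would use that $\ga=\i\ga_{\tt Im}$ is purely imaginary, so $\re[J(\xi_h,\xi_h)]=0$ and hence $\re[a_h(\xi_h,\xi_h)]=\norm{\xi_h'}^2-k^2\norm{\xi_h}^2$, which rearranges to $\norm{\xi_h'}^2\le\abs{a_h(\xi_h,\xi_h)}+k^2\norm{\xi_h}^2$. Galerkin orthogonality gives $a_h(\xi_h,\xi_h)=a_h(\eta,\xi_h)$, and since $\eta\in V^\perp$ and $\xi_h\in V_h$ I can invoke Lemma~\ref{perpcont}: bounding the $\eta$-factors by \eqref{L2err2} and $\abs{J(\xi_h,\xi_h)}^{1/2}$ by Proposition~\ref{Jconv} (through the triangle inequality) yields $\abs{a_h(\xi_h,\xi_h)}\lesssim(\abs{\ga}+\abs{\ga}^{-1})(kh)^2\norm{f}^2$. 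For the zeroth-order term I bound $k\norm{\xi_h}\le\norm{ke}+k\norm{\eta}$, where $\norm{ke}$ is controlled by the first part and $k\norm{\eta}\lesssim kh^2\abs{u}_2\lesssim(kh)^2\norm{f}\lesssim kh\norm{f}$ under $kh\lesssim1$. Collecting these gives $\norm{\xi_h'}\lesssim(\abs{\ga}+\abs{\ga}^{-1})\big(kh+\min(1,k^3h^2)\big)\norm{f}$, and the triangle inequality $\norm{e'}\le\norm{\eta'}+\norm{\xi_h'}$ with $\norm{\eta'}\lesssim h\abs{u}_2\lesssim kh\norm{f}$ completes the claim.

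The main obstacle is the duality step, and specifically the estimation of $a_h(e,z-\pi_h z)$: unlike in the $H^1$ part neither argument lies in $V_h$, so Lemma~\ref{perpcont} is not directly available and one must integrate by parts by hand, carefully exploiting $e''=-f-k^2u$, the orthogonality $\psi\perp V_h$, and $\psi(0)=0$ to reduce the form to the nodal, endpoint, volume and penalty pieces. The delicate bookkeeping is tracking the powers of $k$ so that the adjoint regularity $\abs{z}_2\lesssim k^2\norm{ke}$ combines with $\abs{J(e,e)}^{1/2}\lesssim(\abs{\ga}^{1/2}+\abs{\ga}^{-1/2})kh\norm{f}$ to land exactly on the pollution order $k^3h^2$; I also expect the unconditional $O(1)$ bound required for the $\min$ to rest on the discrete stability of the scheme rather than on the duality alone.
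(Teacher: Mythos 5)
Your overall strategy matches the paper's: a duality argument for the weighted $L^2$ bound and, for the $H^1$ bound, the energy identity $\re[a_h(\xi_h,\xi_h)]=\|\xi_h'\|^2-k^2\|\xi_h\|^2$ combined with Galerkin orthogonality and Lemma~\ref{perpcont}; your $H^1$ part is essentially the paper's proof. The duality step is implemented differently: you take the \emph{continuous} adjoint solution $z$ of $-z''-k^2z=k^2e$ (writing $e=u-u_h$) and estimate $a_h(e,z-\pi_h z)$ by an explicit element-wise integration by parts, whereas the paper introduces the \emph{discrete} adjoint solution $z_h\in V_h$ of $a_h(w_h,z_h)=(w_h,\xi_h)$, writes $\|\xi_h\|^2=a_h(\eta,z_h)$, and then simply reuses Lemma~\ref{perpcont} (with $\eta\in V^\perp$, $z_h\in V_h$) together with Proposition~\ref{Jconv} applied to the adjoint pair to get $|J(z_h,z_h)|^{1/2}\lesssim(|\ga|^{1/2}+|\ga|^{-1/2})kh\|\xi_h\|$. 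Your reduction of $a_h(e,\psi)$ to the nodal sum, the endpoint term, $(f,\psi)$ and $J(e,\psi)$ is correct (the cancellation $-k^2(e,\psi)+k^2(u,\psi)=k^2(u_h,\psi)=0$ works), and the bookkeeping with $|z|_2\lesssim k^2\|k e\|$ does land on $k^3h^2$; so this variant is workable, at the price of redoing by hand the continuity that the paper gets for free from Lemma~\ref{perpcont}.

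The genuine gap is the unconditional bound $\|k(u-u_h)\|\lesssim(|\ga|+|\ga|^{-1})\|f\|$ that supplies the ``$1$'' in $\min(1,k^3h^2)$ (and which also enters your $H^1$ estimate through $k\|\xi_h\|$). You assert it ``follows from the discrete stability of the scheme'', but no such $k$-uniform $L^2$ stability estimate is available to cite at this point: Lemma~\ref{Jstab} gives uniqueness and control only of $|J(u_h,u_h)|+k|u_h(1)|^2$, not of $\|ku_h\|$, and Theorem~\ref{tsta} concerns real $\ga$ and is proved much later by a completely different (Green's function) route. The paper devotes a substantial block of this proof to exactly this point: it writes $\|k\xi_h\|^2=-\re[a_h(\xi_h,\xi_h)]+(\xi_h',\xi_h')$, integrates by parts to get $(\xi_h',\xi_h')=\sum_{j=1}^{n-1}[\xi_h']_j\overline{\xi_h}(x_j)+(\xi_h'(1)-\i k\xi_h(1))\overline{\xi_h}(1)+\i k|\xi_h(1)|^2$, bounds the nodal sum by $|\ga|^{-1/2}(kh)^{-1}|J(\xi_h,\xi_h)|^{1/2}\|k\xi_h\|$ via $\sum_j|\xi_h(x_j)|^2\lesssim h^{-1}\|\xi_h\|^2$, and closes with \eqref{aa7} and Young's inequality. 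Without this (or an equivalent) argument your proof only yields $\|k(u-u_h)\|\lesssim(|\ga|+|\ga|^{-1})k^3h^2\|f\|$, which is strictly weaker than the stated theorem in the regime $k^3h^2\gtrsim 1$.
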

\begin{proof}
Using once again the decomposition $u - u_h = \eta -\xi_h$, by the
estimate \eqref{L2err1}, we only need to estimate the error in
$\xi_h$. Consider the adjoint problem, find $z \in H^2(\Om)$ such
that
\begin{equation}\label{adjoint}
(w',z') - k^2(w,z) - \i k w(1) \bar z(1)  = (w,\xi_h)\quad \forall w\in V
\end{equation}
and its finite element equivalent, find $z_h \in V_h$ such that
\begin{equation}\label{adjointFEM}
a_h(w_h,z_h) = (w_h,u_h-\pi_h u)\quad \forall w_h\in V_h.
\end{equation}
By Lemma \ref{Jstab} and Proposition \ref{Jconv}, $z_h$ exists and satisfies
\[
|J(z_h,z_h)|^{\frac{1}{2}}= |J(z-z_h,z-z_h)|^{\frac12}\lesssim
(|\gamma|^{\frac{1}{2}}+|\ga|^{-\frac{1}{2}}) kh \|\xi_h\|.
\]
Using the consistency of the formulation and the continuity of
Lemma~\ref{perpcont} followed by the \eqref{L2err2} we get
\begin{align*}
\|\xi_h\|^2& = a_h(\xi_h,z_h) = a_h(\eta,z_h) \\
& \lesssim
\Big(|J(\eta,\eta)|^{\frac{1}{2}}+|\ga|^{-\frac{1}{2}}\Big(\sum\limits_{j=1}^n
h^{-1}|\eta(x_j)|^2\Big)^{\frac{1}{2}}\Big)
|J(z_h,z_h)|^{\frac{1}{2}}\\
& \lesssim (|\ga|+|\ga|^{-1})(kh)^2 \|f\| \|\xi_h\|.
\end{align*}
Therefore,
\begin{equation}\label{aa8}
  \|k\xi_h\|\lesssim (|\ga|+|\ga|^{-1})k^3h^2 \|f\|.
\end{equation}
Next we show that $\|k\xi_h\|\lesssim (|\ga|+|\ga|^{-1}) \|f\|$. In fact,
 it follows from the
definition of the sesquilinear form $a_h(\cdot,\cdot)$ that
\begin{align*}
 \|k \xi_h\|^2 &= -{\textnormal{Re}}[a_h(\xi_h,\xi_h)] + (\xi_h',\xi_h')\\
 & \lesssim
 |a_h(\eta,\xi_h)| + (J(\xi_h,\xi_h) + k |\xi_h(1)|^2)^{\frac{1}{2}}(|\ga|^{-\frac{1}{2}}(k h)^{-1}+(k h)^{-\frac{1}{2}}) \|k
 \xi_h\|,
\end{align*}
where we have used an integration by parts in the second term in the
right hand, i.e., $(\xi_h',\xi_h')=\sum_{j=1}^{n-1}[\xi_h']_j\overline{\xi_{h}}(x_j)+(\xi_h'(1)-\i k\xi_h(1))\overline{\xi_h}(1)+\i k|\xi_h(1)|^2$, to derive the last inequality.\\
From the continuity of Lemma~\ref{perpcont} and \eqref{L2err2},
\eqref{aa7} we conclude that
\begin{align*}
|a_h(\eta,\xi_h)| &\lesssim
\Big(|J(\eta,\eta)|^{\frac{1}{2}}+|\ga|^{-\frac{1}{2}}\Big(\sum\limits_{j=1}^n
h^{-1}|\eta(x_j)|^2\Big)^{\frac{1}{2}}\Big) |J(\xi_h,
\xi_h)|^{\frac12}\\
 &\lesssim (|\ga|+|\ga|^{-1})(kh)^2 \|f\|^2.
\end{align*}
Therefore,
\begin{align*}
\|k \xi_h\|^2 &\lesssim (|\ga|+|\ga|^{-1}) (kh)^2\|f\|^2 +
(J(\xi_h,\xi_h) + k |\xi_h(1)|^2)(|\ga|^{-1}(k h)^{-2}+(k h)^{-1})\\
&\lesssim (1+|\ga|+|\ga|^{-2})(1+ (kh)^2)\|f\|^2,
\end{align*}
which together with \eqref{aa8} proves the first claim.

By the definition of $a_h(\cdot,\cdot)$ once again and Galerkin
orthogonality there holds
\begin{align*}
\|\xi_h'\|^2 &= {\textnormal{Re}}[a_h(\xi_h,\xi_h)] + \|k \xi_h\|^2
\lesssim |a_h(\eta,\xi_h)| + \big((|\ga|+|\ga|^{-1})\min(1,k^3h^2)
\|f\|\big)^2.\\
&\lesssim (|\ga|+|\ga|^{-1})(kh)^2 \|f\|^2 +
\big((|\ga|+|\ga|^{-1})\min(1,k^3h^2) \|f\|\big)^2.
\end{align*}
 That is, the second claim holds. This completes the proof of the theorem.
\end{proof}
\begin{myrem}
(a) Note that the above estimate does not impose any constraints on
the choice of the mesh size $h$ compared to $k$. Both estimates
exhibit the standard pollution term, but nevertheless the errors are
upper bounded by data, independently of $h$ and $k$. This shows that
the imaginary part of the stabilization gives control of the
amplitude of the wave.

(b) If the penalty term on the boundary
condition is removed, i.e., if $J(u,v)$ in \eqref{eJ} is replaced by $J(u,v):=\sum\limits_{j=1}^{n-1}\ga h[u']_j[\bar{v}']_j$
then  Theorem~\ref{imge th} still holds. This can be proved by following the analysis given in \cite{H.Wu}. We omit the details.
 As we shall see in the
next section, the real part of the stabilization allows us to
control the phase error provided the stabilization parameter is
chosen appropriately.
%
\end{myrem}
\section{Dispersion analysis}\label{sec-3}
In this section we will consider the case where $\gamma$ is a real
number. Using a dispersion analysis we will derive precise bounds on
the error in the numerical wavenumber. These bounds are then used to
prove that a particular choice of the penalty parameter allows to eliminate the pollution in the one dimensional case.
\subsection{Global FE-equations and discrete fundamental system}
Let $\{\phi_1, \phi_2, \cdots,
\phi_{n-1},\phi_n\}$ be the nodal
basis functions for the space $V_h$ satisfying $\phi_j(x_l)=\delta_{jl}, $ the Kronecker delta, for $ j=1, 2, \cdots, n$ and $l=0, 1, \cdots, n.$
Then the CIP-FEM solution can be spanned as:
\[u_h(x)=\sum_{j=1}^n u_{h,j} \phi_j \ \ \ \textrm{with}\ \   u_{h,j}=u_h(x_j),\ \ j=1,2,\cdots,n.\]
Let $v_h=\phi_i, i=1,\cdots,n$ in \eqref{eipdg}, the CIP formulation can be rewritten as the following
linear system:
\begin{equation}\label{eipdgM}
    L_hU=h F,
\end{equation}
where
\begin{equation*}
   L_h\hskip -2pt =\hskip -2pth\begin{pmatrix}
      a_h(\phi_j,\phi_i)  \\
    \end{pmatrix}_{n\times n},
    U\hskip -2pt=\hskip -2pt\begin{pmatrix}
        u_{h,i}
      \end{pmatrix}_{n\times 1},
      F\hskip -2pt=\hskip -2pt\begin{pmatrix}
          (f, \phi_i)
        \end{pmatrix}_{n\times 1}.
\end{equation*}
 Denote by $t=kh$, $R=-1-4\gamma-t^2/6$, $S=1+3\gamma-t^2/3$, we have

\begin{equation}\label{eLh1}
L_h=
  \begin{pmatrix}
   2S-\gamma & R & \gamma &  &  &  &          \\
   R & 2S & R & \gamma &  &  &         \\
   \gamma & R & 2S & R & \gamma &  &         \\
     &  & \ddots &   \ddots& \ddots& &      \\
            &  & \gamma & R & 2S & R & \gamma \\
            &  & & \gamma &  R & 2S-\gamma & R+2\gamma\\
            &  & & & \gamma & R+2\gamma & S-2\gamma-\i t  \\
  \end{pmatrix}.
\end{equation}
\begin{myrem}
The product $t=kh$ is a measure of the number of elements per wavelength (of
the exact solution). In particular, if the stepwidth is such that $t=\frac{\pi}{l}$ for integer $l$ then exactly
$l$ elements are placed on one half-wave of the exact solution.
\end{myrem}

\subsection{Discrete wavenumber and Dispersion analysis}
Recall that $k$ is the wave number for the BVP
\eqref{e2.1a}--\eqref{e2.1c} and that the functions $e^{\pm\i kx}$
play an important role in the solution of the BVP which satisfy the
equation \eqref{e2.1a} with $f=0$. The discrete wave number $k_h$
for the CIP method is defined similarly by considering the vector
$v$ with $v_j=e^{\i k_h j h}$ and solving the following  ``interior''
equations:
\begin{equation}\label{e3.1}
   \gamma v_{j-2}+ R v_{j-1} + 2S v_j + R v_{j+1} +\gamma v_{j+2}=0, \; j=3,\cdots,n-2.
\end{equation}
Denote by $t_h=k_hh$, the above equations are equivalent to the
equation
\begin{equation}\label{e3.2}
    2\gamma \cos^2 t_h - \left(
 4\gamma+1+\frac{t^2}{6}\right) \cos t_h + 2
\gamma+1-  \frac{t^2}{3} =0,
\end{equation}
which has the roots
\begin{align}\label{e3.3}
\cos t_h^\pm = \frac{4\gamma+1+\frac{t^2}{6} \pm
\sqrt{\left(1+\frac{t^2}{6}\right)^2+4\gamma t^2} }{4 \gamma }.
\end{align}
Some simple calculations show that $|\cos t_h^{-}|\leq 1\le |\cos
t_h^{+}|$ if $\ga\geq
-1/4+t^2/48$ and $|\cos t_h^{-}|\ge 1\ge |\cos
t_h^{+}|$ otherwise. Without
loss of generality, assume $|\cos t_h^{-}|\leq 1$, and define $k_h^{-} := t_h^{-}/h$ and $k_h^+:=t_h^+/h$.
Noting that a large $|\ga|$ may cause a large error (cf.  Theorem~\ref{imge th}) and that $\cos t_h^+$ can not approximate $\cos t$ well ($\cos t_h^+=\frac{2\ga+1}{2\ga}\neq 1$ at $t=0$), for simplicity, in the following we will assume that
$-{1}/{6}\leq\gamma \leq 1/6$. Physically, case $(-)$ describes a
propagating wave whereas case $(+)$ describes a decaying wave
\cite{I.H.T}.
\begin{mylem}\label{lkh3}
Assume that $t=k h\le 1$, $-\frac{1}{6}\leq\gamma \leq \frac{1}{6}$,
then we may show\\
 {\rm(i)} $ \abs{\cos t_h^{-}-1+\frac{t^2}{2}}\le
\frac{1}{6}  t^4, \ \
 \abs{k_h^{-} - k }\lesssim  k^3h^2;$ \\
{\rm(ii)} $\text{If }  \gamma=-1/12,\  \text{then } \abs{k_h^{-} - k }\lesssim
k^5h^4;$ \\
{\rm(iii)}$ \text{If } \abs{\gamma-\gamma_o} \lesssim \frac{1}{k^2 h} \text{
where }\gamma_{o}=\dfrac{6\cos t-6 + t^2\cos t+ 2t^2}{12(1-\cos
t)^2},\text{ then }\abs{k_h^{-}- k}\lesssim k h.$
\end{mylem}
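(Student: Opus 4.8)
The plan is to express all three claims through the single scalar error $\cos t_h^- -\cos t$ and to convert such a bound into one on $k_h^- - k$ by the mean value theorem. I would first rationalise the root formula \eqref{e3.3}. Setting $P:=1+(\tfrac16+2\ga)t^2$ and $Q:=\sqrt{(1+\tfrac{t^2}{6})^2+4\ga t^2}$, the elementary identity $P^2-Q^2=4\ga(\ga+\tfrac16)t^4$ yields the exact formula
\[
\cos t_h^- - 1 + \frac{t^2}{2} = \frac{P-Q}{4\ga}=\frac{(\ga+\tfrac16)\,t^4}{P+Q}.
\]
For $-\tfrac16\le\ga\le\tfrac16$ and $t\le1$ both $P$ and $Q$ are positive, and one checks the elementary inequality $P+Q\ge 6\ga+1$ (equivalently $\tfrac{\ga+1/6}{P+Q}\le\tfrac16$), which gives at once the first inequality of (i). Combining it with the alternating-series bound $0\le\cos t-1+\tfrac{t^2}{2}\le\tfrac{t^4}{24}$ gives $\abs{\cos t_h^- -\cos t}\lesssim t^4$.

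Next I would set up the conversion to wavenumbers. Since $\cos t_h^-$ lies within $O(t^4)$ of $\cos t=1-\tfrac{t^2}{2}+O(t^4)$, the angle $t_h^-=\arccos(\cos t_h^-)$ lies within $o(t)$ of $t$, so applying the mean value theorem to $\cos$ on the interval between $t_h^-$ and $t$, where $\sin\xi\simeq t$, gives
\[
\abs{k_h^- - k}=\frac{\abs{t_h^- - t}}{h}\lesssim \frac{\abs{\cos t_h^- -\cos t}}{kh^2}.
\]
With $\abs{\cos t_h^- -\cos t}\lesssim t^4=k^4h^4$ this proves the second inequality of (i). For (ii) I would put $\ga=-\tfrac{1}{12}$ in the identity above: then $\tfrac16+2\ga=0$, so $P=1$ and $Q=\sqrt{1+t^4/36}$, whence $\cos t_h^-=1-\tfrac{t^2}{2}+3\big(\sqrt{1+t^4/36}-1\big)=1-\tfrac{t^2}{2}+\tfrac{t^4}{24}+O(t^8)$, which matches $\cos t$ through order $t^4$. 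Hence $\abs{\cos t_h^- -\cos t}\lesssim t^6$, and the conversion gives $\abs{k_h^- - k}\lesssim k^5h^4$.

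For (iii) I would first verify that $\ga_o$ is precisely the value of $\ga$ for which $\cos t$ itself solves the dispersion equation \eqref{e3.2}: substituting $\cos t_h=\cos t$ into \eqref{e3.2} and solving the resulting linear equation for $\ga$ reproduces the stated $\ga_o$, and since $0<\cos t\le1$ this is the $(-)$ branch, so $\cos t_h^-(\ga_o)=\cos t$ and the phase error vanishes at $\ga=\ga_o$. It then remains to bound the sensitivity of the root to $\ga$. Implicit differentiation of \eqref{e3.2} gives
\[
\frac{d\cos t_h^-}{d\ga}=-\frac{2(\cos t_h^- -1)^2}{4\ga\cos t_h^- -(4\ga+1+\tfrac{t^2}{6})},
\]
whose numerator is $\simeq t^4$ and whose denominator equals $-1$ to leading order near $\ga_o$ (where $\cos t_h^-\approx1-\tfrac{t^2}{2}$ and $\ga_o\approx-\tfrac1{12}$); hence $\abs{d\cos t_h^-/d\ga}\lesssim t^4$. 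Integrating over $\abs{\ga-\ga_o}\lesssim (k^2h)^{-1}$ gives $\abs{\cos t_h^- -\cos t}\lesssim t^4(k^2h)^{-1}=k^2h^3$, and the conversion formula yields $\abs{k_h^- - k}\lesssim kh$.

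The main obstacle is this last step: the derivative bound $\abs{d\cos t_h^-/d\ga}\lesssim t^4$ must hold uniformly over the whole interval $\abs{\ga-\ga_o}\lesssim (k^2h)^{-1}$, which need not be short. This I would handle by a bootstrap: the integrated estimate $\abs{\cos t_h^- -\cos t}\lesssim t^4\abs{\ga-\ga_o}\lesssim t^2h\le t^2$ keeps $\cos t_h^-$ within $O(t^2)$ of $1-\tfrac{t^2}{2}$, which keeps the denominator within $O(t^2)$ of $-1$ and the numerator $\simeq t^4$, closing the estimate self-consistently. The remaining points—the elementary inequality $P+Q\ge 6\ga+1$ underlying (i) and the uniform positivity of $Q$ on the parameter range—are routine.
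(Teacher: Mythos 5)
Your proof is correct and follows essentially the same route as the paper's: an exact rationalized formula for $\cos t_h^{-}-1+\tfrac{t^2}{2}$ (the paper's \eqref{g3.5}), the conversion $t\abs{t_h^{-}-t}\lesssim\abs{\cos t_h^{-}-\cos t}$ via a lower bound on the sine (the paper's \eqref{bb5}), and for (iii) the observation that $\cos t_h^{-}(\ga_o)=\cos t$ combined with a Lipschitz-in-$\ga$ bound of order $t^4$. The only cosmetic difference is that the paper obtains this last bound by directly subtracting the two explicit root formulas rather than by implicit differentiation, which also renders your bootstrap unnecessary since $1-\cos t_h^{-}\le\tfrac{t^2}{2}$ holds uniformly on the whole parameter range.
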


\begin{proof}
 Denote $t_h=t_h^{-}$ and from \eqref{e3.3}, we have
\begin{equation}\label{ac}
{1-\cos t_h^{-}}={\frac{t^2}{1+\frac{t^2}{6}+\sqrt{\big(1+\frac{t^2}{6}\big)^2+4\ga t^2}}}\leq \frac{t^2}{2}
\end{equation}
and
\begin{align}\label{ab}
\sqrt{\big(1+\frac{t^2}{6}\big)^2+4\ga t^2}=4\ga (1-\cos
t_h^{-})+1+\frac{t^2}{6}.
\end{align}
Clearly,  $t_h^{-}=k_h^{-} h \in (0,\frac{\pi}{2})$ (cf. \eqref{ac}).
It follows from  \eqref{e3.3} and \eqref{ab} that
\begin{align}\label{g3.5}
\cos
t_h^{-}-1+\frac{t^2}{2}&=\frac{(\frac{2}{3}+4\ga)t^4}{2(1+\frac{t^2}{6}+\sqrt{\big(1+\frac{t^2}{6}\big)^2+4\ga
t^2})(1-\frac{t^2}{6}+
\sqrt{\big(1+\frac{t^2}{6}\big)^2+4\ga t^2})}\\
\nn &=\frac{(\frac{1}{3}+2\ga)t^4}{2+\frac{t^2}{3}+4\ga
t^2+2\sqrt{\big(1+\frac{t^2}{6}\big)^2+4\ga t^2}}\\
\nn &=\frac{(1+6\ga)t^4}{2(6+t^2+12\ga(1-\cos t_h^{-})+ 6\ga t^2)}\\
\nn &=\frac{\left(1+6\gamma\right)t^4}{12} -
\frac{\left(1+6\gamma\right)\left(1+ 6\gamma +12\gamma(1-\cos
t_h^{-})/t^2\right)t^6}{12\left(6+t^2 +12\gamma(1-\cos t_h^{-})+ 6\gamma
t^2\right)},\
\end{align}
which together with \eqref{ac} implies the first inequality of $\rm(i)$. The second
inequality and \rm(ii) can be proved easily as follows: the
inequality $\sin\ta>\frac2{\pi}\ta,$ $\forall
\ta \in(0,\frac{\pi}{2})$ implies
\begin{equation}\label{bb5}
t \abs{t_h^--t}\lesssim \abs{t_h^--t}\abs{t_h^-+t}\lesssim \abs{2\sin\frac{t_h^--t}{2}\sin\frac{t_h^-+t}{2}}
    =\abs{\cos t_h^--\cos t}
\end{equation}
and it is easy to show that:
\[\abs{\cos t_h^{-} -\left(1-\frac{t^2}{2} + \frac{6\gamma+1}{12}t^4\right)} \lesssim t^6,\qquad
\abs{\cos t -\left(1-\frac{t^2}{2} + \frac{t^4}{24}\right)} \lesssim
t^6,
\] which implies that the second inequality of \rm(i) and $\rm(ii)$ hold.

In the following, we turn to prove the last inequality. Note that
$\cos t_h^{-}$ is the function of $\gamma$ and $t_h^{-}$, $\gamma_o$
satisfies $\cos t_h^-(\gamma_o)=\cos t$ and hence
$t_h^{-}(\gamma_o)=t.$
 By some simple calculations,
\begin{align*}
    \abs{\cos t_h^{-}-\cos t}&=\abs{\frac{1+\frac{t^2}{6}-\sqrt{\left(1+\frac{t^2}{6}\right)^2+4\gamma t^2} }{4 \gamma
}-\frac{1+\frac{t^2}{6}-\sqrt{\left(1+\frac{t^2}{6}\right)^2+4\gamma_o
t^2} }{4 \gamma_o }}\\
&=\abs{\frac{\sqrt{\left(1+\frac{t^2}{6}\right)^2+4\gamma
t^2}-\sqrt{\left(1+\frac{t^2}{6}\right)^2+4\gamma_o
t^2}}{({1+\frac{t^2}{6}+\sqrt{\left(1+\frac{t^2}{6}\right)^2+4\gamma
t^2}})({1+\frac{t^2}{6}+\sqrt{\left(1+\frac{t^2}{6}\right)^2+4\gamma_o
t^2}})}}t^2\\
&\lesssim \abs{\sqrt{\left(1+\frac{t^2}{6}\right)^2+4\gamma
t^2}-\sqrt{\left(1+\frac{t^2}{6}\right)^2+4\gamma_o t^2}}t^2\\
&\lesssim\abs{\frac{4(\ga-\ga_o)}{\sqrt{\left(1+\frac{t^2}{6}\right)^2+4\gamma
t^2}+\sqrt{\left(1+\frac{t^2}{6}\right)^2+4\gamma_o t^2}}}t^4 \lesssim \abs{\gamma-\gamma_o}t^4,
\end{align*}
and \eqref{bb5} therefore,
\begin{align*}
    t \abs{t_h^{-}-t}\lesssim \abs{\cos t_h^{-}-\cos t}\lesssim t^4\abs{\gamma-\gamma_o} \lesssim t^2h,
\end{align*}
which implies that $\rm(iii)$ holds. This completes the proof of the lemma.
\end{proof}
\begin{myrem}
Note that the phase difference between the exact and the linear finite element solutions obtained is $O(k^{3}h^{2})$ (cf. \cite{M.A,F.I.I.B}). While for the CIP-FEM, if the penalty parameter $\ga$ is close enough to $\ga_o$ the phase difference is $O(kh)$ and, as a result, the CIP-FEM is pollution free (cf. Theorem~\ref{terr3} below). Figure~\ref{best penparam} gives a plot of the optimal penalty parameter $\ga_o$ versus $t$ for $0<t\le 1$.
\begin{figure}[!htb]
\begin{center}
\includegraphics[scale=0.6]{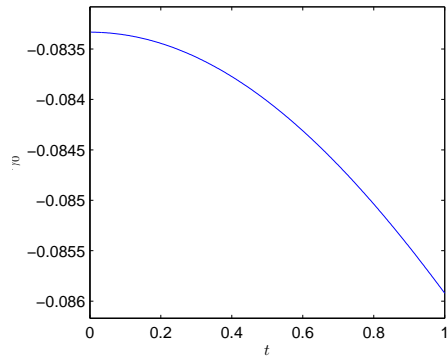}
\end{center}
\caption{\small{The optimal penalty parameter versus $t=kh\leq 1$.}}\label{best penparam}
\end{figure}

\end{myrem}
\section{The discrete Green's function}

To construct the discrete Green's function, we first find the
inverse of the stiffness matrix $L_h$. Inspired by the formulation
of the Green's function for the BVP (cf. \eqref{eG}), we find
$G_h=L_h^{-1}$ of the following form:
\begin{align}\label{eGh}
G_{h,j,m}=&\left\{ \begin{aligned}
&\textstyle\sum_ {i=1}^4 A_{m,i} \eta_i^j , &j< m,\\
&\textstyle\sum_{i=1}^4B_{m,i}\eta_i^j , &j\ge m, \end{aligned}
\right. \end{align} where$\quad \eta_1=e^{-\i k_h^{-}h},\eta_2=e^{\i
k_h^{-}h},\eta_3=e^{-\i k_h^{+}h},\eta_4=e^{\i k_h^{+}h}$.

By the definition of $\eta_i, i=1,2,3,4$ there holds the facts:
\begin{equation}\label{bbb1}
\eta_1\eta_2=\eta_3\eta_4=1,\qquad\eta_1+\eta_2=2\cos t_h^-,\qquad  \eta_3+\eta_4=2\cos t_h^+.
\end{equation}
If $|\ga|\leq 1/6$, by some simple calculations, we can get
\begin{equation}\label{bbb3}
|\cos t_h^+ -1|\geq 3.
\end{equation}
Without loss of generality, assume $|\eta_4|>|\eta_3|$, it is clear that
\begin{equation}\label{bbb2}
|\eta_4|>3 \ \ {\rm {and}} \ \  |\eta_3|<\frac{1}{3}.
\end{equation}
 From \eqref{eGh}, the solution of \eqref{eipdgM} is represented
as
\begin{align}\label{euh}
    u_{h,j}=h\sum_{m=1}^{n} G_{h,j,m}(f, \phi_{m}),\ \ \ j=1,2,\cdots,n,
\end{align}
and hence the CIP-FEM solution is given by
\begin{align*}
    u_h=\sum_{j=1}^n u_{h,j}\phi_j.
\end{align*}
To represent the derivative of the CIP-FEM solution, we define a $n
\times n$ matrix $H_h$ as
\begin{align}\label{eHh}
H_{h,j,m}=G_{h,j,m}-G_{h,j-1,m}\ \  1 \le j \le n,\ \ \text{Here}\
\ G_{h,0,m}:=0.
\end{align}
 It is
clear that
\begin{align}\label{euh'1}
    u_h'(x)=\frac{u_{h,j}-u_{h,j-1}}{h}=\sum_{m=1}^{n} H_{h,j,m}(f, \phi_m), \quad\forall x\in [x_{j-1},x_j], \; j=1,\cdots,n.
\end{align}

Throughout this section let $\widetilde{C}$ denote a \emph{general function} that may have different expressions at different places but is bounded (uniformly) by some constant independent of $k$, $h$, and the penalty parameters.
We
first state a simple but useful lemma without proof.
\begin{mylem}\label{LA1}Suppose $0< t \le 1$, if $|b|\le
\sigma_1 |a|$, $0<\sigma_1<1$, $a$, $b$ and $\sigma_1$ are independent of the penalty parameter. Then
\begin{align}\label{esti}
\frac{1}{a-bt}=\frac{1}{a}\left(1+\widetilde{C}t \right).
\end{align}
\end{mylem}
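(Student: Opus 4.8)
The plan is to factor out $1/a$ and reduce the claim to an elementary geometric-series estimate. First I would note that the hypothesis $|b|\le\sigma_1|a|$ with $0<\sigma_1<1$ forces $a\neq 0$, so that I may write
\[
\frac{1}{a-bt}=\frac{1}{a}\cdot\frac{1}{1-rt},\qquad r:=\frac{b}{a}.
\]
By assumption $|r|\le\sigma_1<1$, and since $0<t\le1$ we have $|rt|\le\sigma_1 t\le\sigma_1<1$; in particular the denominator stays bounded away from zero, $|1-rt|\ge 1-|r|t\ge 1-\sigma_1>0$.

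Next I would apply the identity $\frac{1}{1-x}=1+\frac{x}{1-x}$ with $x=rt$ to obtain
\[
\frac{1}{a-bt}=\frac{1}{a}\Big(1+\frac{rt}{1-rt}\Big)=\frac{1}{a}\big(1+\widetilde{C}\,t\big),\qquad \widetilde{C}:=\frac{r}{1-rt}.
\]
It then remains only to verify that $\widetilde{C}$ is admissible in the sense of this section, i.e.\ that it is uniformly bounded by a constant independent of $k$, $h$ and the penalty parameters. Combining the two bounds above gives
\[
|\widetilde{C}|=\frac{|r|}{|1-rt|}\le\frac{\sigma_1}{1-\sigma_1},
\]
and since $a$, $b$ and $\sigma_1$ are assumed independent of the penalty parameter, the right-hand side is a fixed constant with the required independence, which finishes the argument.

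There is no genuine obstacle in this proof; the single point deserving care is the strict inequality $\sigma_1<1$. This is precisely what keeps $1-rt$ bounded away from zero uniformly in $t$ and hence makes $\widetilde{C}$ uniformly bounded. If one only had $\sigma_1=1$, the factor $1-rt$ could degenerate as $t\to1$, and the constant controlling $\widetilde{C}$ would blow up, so the hypothesis cannot be relaxed.
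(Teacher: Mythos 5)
Your argument is correct: writing $r=b/a$ and using $\tfrac{1}{1-rt}=1+\tfrac{rt}{1-rt}$ with the uniform bound $|1-rt|\ge 1-\sigma_1>0$ gives $|\widetilde{C}|\le \sigma_1/(1-\sigma_1)$, which is exactly the kind of uniform bound the paper's convention for $\widetilde{C}$ requires. The paper states this lemma explicitly \emph{without proof}, so there is nothing to compare against; your elementary derivation is evidently the intended one, and your closing remark correctly identifies $\sigma_1<1$ (strict) as the hypothesis that keeps the denominator away from zero uniformly in $t$.
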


 The following lemma presents estimates for $H_{h,j,m}$.
\begin{mylem}\label{lHh2} Assume that $t=kh\le 1$, $k\ge 1$, $0 < |\gamma| \leq \frac{1}{6} $.  Then
\begin{eqnarray}
H_{h,j,m} =\left\{ \begin{aligned}
&\cos(j t_h^-)  e^{\i m t_h^-}+\widetilde{C} t +\widetilde{C} \eta_4^{j-m} , \quad&j < m,\\
&\i\sin(m t_h^-) e^{\i j t_h^-}+\widetilde{C} t +\widetilde{C}
\eta_4^{m-j} , \quad&j \ge m,
\end{aligned} \right.\label{eHh2}
\end{eqnarray}
where $\widetilde{C}$ is a general function which is bounded by some constant independent of $k$, $h$, and the penalty parameters.
\end{mylem}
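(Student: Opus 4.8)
The target is the asymptotic expansion \eqref{eHh2} for $H_{h,j,m}=G_{h,j,m}-G_{h,j-1,m}$, so the plan is to first pin down the eight coefficients $A_{m,i},B_{m,i}$ in the ansatz \eqref{eGh} for $G_h=L_h^{-1}$, then differentiate and extract the leading behaviour. The defining relations come from $L_h G_h = I$. Away from the boundary rows, the interior three-term recurrence \eqref{e3.1} is exactly what the $\eta_i$ are built to annihilate (that is the content of \eqref{bbb1}--\eqref{e3.2}), so $G_{h,j,m}$ automatically solves the homogeneous interior equation in $j$ for $j<m$ and for $j>m$; the inhomogeneity $e_m$ and the two modified boundary rows of $L_h$ (the top-left block involving the Dirichlet condition and the bottom-right block carrying $-\i t$) then fix the coefficients. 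Concretely I would impose: the left Dirichlet-type condition at $j=0$ and $j=1$, a matching/jump condition across $j=m$ encoding the unit source, and the two impedance-type conditions at the right endpoint $j=n-1,n$. This gives a linear system for $\{A_{m,i},B_{m,i}\}$ whose structure mirrors the continuous Green's function \eqref{eG}, which is precisely the motivation stated before \eqref{eGh}.

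The key simplification is the mode separation provided by \eqref{bbb2}: since $|\eta_4|>3$ and $|\eta_3|<\tfrac13$, the pair $\eta_3,\eta_4$ are strongly decaying/growing ``evanescent'' modes, while $\eta_1,\eta_2=e^{\mp\i t_h^-}$ are the unimodular ``propagating'' modes. I expect that after solving for the coefficients, the propagating part reproduces the discrete analogue of \eqref{eG}: for $j<m$ one gets a combination summing to $\cos(j t_h^-)e^{\i m t_h^-}$ and for $j\ge m$ to $\i\sin(m t_h^-)e^{\i j t_h^-}$, exactly paralleling $H(x,s)$ in \eqref{eu'H}. The differencing $G_{h,j,m}-G_{h,j-1,m}$ applied to a pure mode $\eta_i^j$ produces the factor $\eta_i-\eta_i^{-1}$ (up to the shift), which for the propagating modes turns $\sin$ into $\cos$ and vice versa — this is how the derivative structure of \eqref{eu'H} emerges discretely. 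The evanescent contributions, after differencing, are controlled by $\eta_4^{j-m}$ (resp. $\eta_4^{m-j}$) by \eqref{bbb2}, giving the $\widetilde C\,\eta_4^{\,j-m}$ term, and the remaining discrepancies between the discrete and continuous quantities are $O(t)$, absorbed into $\widetilde C t$.

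The main obstacle is the coefficient bookkeeping: I must show that every coefficient $A_{m,i},B_{m,i}$ — and in particular the various denominators arising from Cramer's rule or from inverting the $4\times4$ boundary system — is a ratio whose leading term is $O(1)$ with corrections of relative size $O(t)$. This is exactly what Lemma~\ref{LA1} is designed to handle: whenever a denominator has the form $a-bt$ with $|b|\le\sigma_1|a|$ and $a$ independent of the penalty parameter, I may replace $1/(a-bt)$ by $\tfrac1a(1+\widetilde C t)$. The delicate points are (i) verifying the nondegeneracy $a\neq0$ uniformly (using $0<|\gamma|\le\tfrac16$, $t\le1$, and the separation \eqref{bbb2} so that factors like $\eta_4-\eta_3$, $\eta_4-\eta_2$, etc.\ stay bounded away from zero), and (ii) tracking how the impedance term $-\i t$ in the corner of $L_h$ perturbs the coefficients at the right boundary, since that is where the $\i\sin(m t_h^-)$ factor (the discrete analogue of $\i\sin ks\,e^{\i kx}$) must come from. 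Once the coefficients are expanded via Lemma~\ref{LA1}, forming $H_{h,j,m}$ by differencing and collecting the unimodular versus evanescent modes yields \eqref{eHh2}; the uniform boundedness of each $\widetilde C$ follows because, by the estimates of Lemma~\ref{lkh3}, $t_h^-$ and hence $\cos(jt_h^-),\sin(mt_h^-)$ are themselves $O(1)$ and depend on $\gamma$ only mildly.
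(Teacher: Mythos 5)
Your plan follows the same route as the paper: write $G_h=L_h^{-1}$ in the ansatz \eqref{eGh}, derive an $8\times 8$ linear system for the $A_{m,i},B_{m,i}$ from the interior recurrence \eqref{e3.1} plus the boundary and source rows, solve by Cramer's rule, and expand to leading order in $t$ via Lemma~\ref{LA1}, separating the unimodular modes $\eta_1,\eta_2$ from the evanescent ones $\eta_3,\eta_4$. But what you have written is a road map that defers exactly the part where the proof lives, and the organizing principle you propose for that part would fail as stated. You claim every coefficient and every Cramer denominator is ``a ratio whose leading term is $O(1)$ with corrections of relative size $O(t)$'', so that Lemma~\ref{LA1} applies directly. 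That is not so: the determinant factors as $\det V=b_1a_4\sigma(1+\theta_3 t)$ with $b_1=O(t)$ (it carries the impedance entry $-\i t$), $a_4\simeq\eta_4^n$ exponentially large, and $\sigma=\eta_3+\eta_4-(\eta_1+\eta_2)\simeq 1/\gamma$; and the coefficients themselves are of size $1/\sin t_h^-\simeq 1/t$ (e.g.\ $B_{m,2}=\big(\sin(mt_h^-)+\widetilde Ct\big)/\sin t_h^-$). The $O(1)$ bound on $H_{h,j,m}$ --- and, crucially, the claimed independence of $\widetilde C$ from the penalty parameter as $\gamma\to0$ --- only emerge from two cancellations your plan does not identify: the differencing weights $\eta_i-1=O(t)$ against the $1/\sin t_h^-$ in the coefficients, and the factor $1/\gamma$ in the right-hand side $z$ against $\sigma$. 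The paper's Steps 2--3 (the explicit bounds $|b_1|>\tfrac53 t$, $|b_2|<\tfrac{t^2}{3}$, $|a_4|\ge 6|\eta_4|^n$, $|a_1b_2-a_2b_1|\le 2t^2$, and the entrywise expansions of $U_m^{-1}z$, $V^*V_1$, $V^*V_2$) exist precisely to exhibit these cancellations before Lemma~\ref{LA1} is invoked on the residual $(1+\theta t)$ factors; Lemma~\ref{LA1} alone, with its hypothesis that $a$ be independent of the penalty parameter, cannot absorb the $\gamma$- and $t$-degeneracies of $\det V$.

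Two further omissions. For $m\in\{1,2,3,4,n-2,n-1,n\}$ the reduction to the $8\times 8$ system \eqref{eAB} breaks down (the system is underdetermined), and one must argue separately that the solution of \eqref{eAB} still produces a valid representation of $G_{h,\cdot,m}$; the paper does this explicitly for $m=2$ and $m=1$. And the diagonal entry $H_{h,m,m}$ mixes $A_m$ with $B_m$, so it lands in the stated form only via the identity $\sum_{i=1}^4(B_{m,i}-A_{m,i})\eta_i^{m-1}=0$, which comes from the reduced system and is not a consequence of the mode separation you describe. As it stands the proposal is a correct outline of the right strategy, but not a proof, and the bookkeeping scheme it proposes would stall at the degeneracies above.
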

\begin{proof} The proof is divided into four steps.

\textbf{Stpe 1. Solving for $A_{m,i}$ and $B_{m,i}$}.
$G_{h,j,m}$ are determined by the system of equations:
\begin{align}\label{acc1}
\left\{\begin{array}{ll}
    (2S-\gamma)G_{h,1,m}+ R G_{h,2,m}+ \gamma G_{h,3,m}=\de_{1,m}, \\
     R G_{h,1,m}+ 2SG_{h,2,m}+ R G_{h,3,m}+ \gamma G_{h,4,m}=\de_{2,m},  \\
   \gamma G_{h,n-3,m}+R G_{h,n-2,m} +(2S-\gamma) G_{h,n-1,m} +(R+2\gamma) G_{h,n,m}
    =\de_{n-1,m}, \\
      \gamma G_{h,n-2,m}+(R+2\gamma) G_{h,n-1,m} +(S-2\gamma-\i t) G_{h,n,m}
    =\de_{n,m},\\
    \gamma G_{h,j-2,m}+R G_{h,j-1,m} +2S G_{h,j,m} +R G_{h,j+1,m} +\gamma G_{h,j+2,m}
    =\de_{j,m},
 \end{array} \right.
\end{align}
where $3 \le j \le n-2$ in the last equality of the above system and
$\de_{j,m}$, $1\leq j, m \leq n$ are the Kronecker delta.

Formula \eqref{e3.1} yields
\begin{align}\label{acc2}
   & \gamma\eta_i^{-2}+R\eta_i^{-1}+ 2S
    + R \eta_i+\gamma\eta_i^2 =0.
\end{align}

We first consider $m=5,\cdots,n-3.$ From \eqref{eGh} and \eqref{acc2},
the system \eqref{acc1} is reduced to the following
system of eight equations:
\begin{align}\label{abc1}
\left\{\begin{array}{ll}
    \textstyle\sum_{i=1}^4\eta_i (2S-\gamma+ R \eta_i + \gamma\eta_i^2) A_{m,i}=0,\\
    \textstyle\sum_{i=1}^4 \eta_i^2(R\eta_i^{-1}+ 2S+ R \eta_i + \gamma\eta_i^2) A_{m,i}=0, \\
    \textstyle\sum_{i=1}^4 \eta_i^{m-2}\left[(\gamma\eta_i^{-2}+R\eta_i^{-1}+ 2S +
    R \eta_i) A_{m,i}+ (\gamma\eta_i^2) B_{m,i}\right]=0, \\
   \textstyle\sum_{i=1}^4\eta_i^{m-1}\left[(\gamma\eta_i^{-2}+R\eta_i^{-1}+ 2S
    ) A_{m,i}+ (R \eta_i+\gamma\eta_i^2)B_{m,i}\right]=0, \\
    \textstyle\sum_{i=1}^4\eta_i^{m}\left[(\gamma\eta_i^{-2}+R\eta_i^{-1}
    ) A_{m,i}+ (2S+R \eta_i+\gamma\eta_i^2)B_{m,i}\right]=1, \\
    \textstyle\sum_{i=1}^4\eta_i^{m+1}\left[(\gamma\eta_i^{-2}
    ) A_{m,i}+ (R\eta_i^{-1}+2S+R \eta_i+\gamma\eta_i^2)B_{m,i}\right]=0, \\
     \textstyle\sum_{i=1}^4\eta_i^{n-1}\big[\gamma \eta_i^{-2}+R \eta_i^{-1} + 2S-\gamma  +(R+ 2\gamma) \eta_i\big] B_{m,i}=0, \\
     \textstyle\sum_{i=1}^4\eta_i^{n}\big[\gamma \eta_i^{-2}+(R+2\gamma) \eta_i^{-1} + S-2\gamma-\i t \big] B_{m,i}=0.
 \end{array} \right.
\end{align}
Plugging \eqref{acc2} into the first seven equations of \eqref{abc1} gives
\begin{eqnarray}\label{abc2}
\begin{aligned}
    &\textstyle\sum_{i=1}^4(\gamma \eta_i^{-1} +R + \gamma \eta_i) A_{m,i}=0,\\
    &\textstyle\sum_{i=1}^4 \gamma A_{m,i}=0,  \\
    &\textstyle\sum_{i=1}^4 \gamma \eta_i^m (B_{m,i}- A_{m,i})=0, \\
    &\textstyle\sum_{i=1}^4  (R + \gamma\eta_i)\eta_i^m(B_{m,i}-A_{m,i})=0,\\
    &\textstyle\sum_{i=1}^4  (\ga \eta_i^{-2} + R \eta_i^{-1})\eta_i^m(A_{m,i}-B_{m,i})=1,\\
    &\textstyle\sum_{i=1}^4 \gamma \eta_i^{m-1 }(A_{m,i}- B_{m,i})=0, \\
    &\textstyle\sum_{i=1}^4 \ga (\eta_i^{-1} -2 + \eta_i)\eta_i^n B_{m,i}=0.
\end{aligned}
\end{eqnarray}
By $R=-1-4\gamma-t^2/6$, $S=1+3\gamma-t^2/3$, the eighth equation of
\eqref{abc1} yields
\begin{equation}\label{abc3}
 \sum_{i=1}^4 \Big(1-\dfrac{t^2}{3}-\Big(1+\dfrac{t^2}{6}\Big)\eta_i^{-1}+\ga(1-\eta_i^{-1})^2-\i
  t\Big)\eta_i^n B_{m,i}=0.
\end{equation}
Then, by simplifying \eqref{abc2} and \eqref{abc3}, a $8\times 8$
system which is equivalent to the system \eqref{abc1} can be obtained:
\begin{align}\label{eAB}
\left[ \begin {array}{cc} -U_m & U_m\\\noalign{\medskip} V_1&
V_2\end {array} \right] \left[
\begin {array}{c}
A_m\\\noalign{\medskip} B_m\end {array} \right]=\left[
\begin {array}{c} z \\\noalign{\medskip}0\end {array} \right]
\qquad z=\left[ -1/\gamma,0,0,0 \right]^T,
\end{align} where
$A_m=[A_{m,1},A_{m,2},A_{m,3},A_{m,4}]^T$,
$B_m=[B_{m,1},B_{m,2},B_{m,3},B_{m,4}]^T$, and the $i$-th $(i=1,2,3,4)$ column of the
matrix $U_m$, $V_1$, $V_2$ are stated as follows:
\begin{align*}
&U_m (:,i)=\eta_i^m\left( \begin
{array}{c}  \eta_i^{-2}\\
\noalign{\medskip}  \eta_i^{-1} \\
\noalign{\medskip} 1\\
\noalign{\medskip} \eta_i\end{array} \right) , \qquad V_1(:,i)=\left(
\begin {array}{c}
\eta_i^{-1} + \eta_i\\
\noalign{\medskip} 1 \\ \noalign{\medskip}
0  \\ \noalign{\medskip} 0  \end{array} \right ), \qquad
 V_2(:,i) =\left( \begin {array}{c} 0 \\
\noalign{\medskip} 0  \\
\noalign{\medskip} a_i\\
\noalign{\medskip} b_i
\end{array} \right),
\end{align*}
\begin{align}
 &a_i=( \eta_i^{-1}-2+
\eta_i )\eta_i^{n},\label{eai}\\
 &b_i=\Big( 1-\frac{t^2}{3}-\Big(1+\frac{t^2}{6} \Big)\eta_i^{-1}+
\gamma(1-\eta_i^{-1})^2-\i t \Big)\eta_i^{n},\qquad i=1,2,3,4.\label{ebi}
\end{align}

Next we consider $m=2,3,4,n-2,n-1,n$, there will be less than $8$
equations, that is, the linear system is underdetermined,  however, we can show that the system \eqref{eAB} gives a special solution. We only prove the case $m=2$, other cases ($m=3,4,n-2,n-1,n$) can be obtained similarly,
we leave the derivation to the interested reader. When $m=2$, from \eqref{eGh} and \eqref{acc2}, the system \eqref{acc1} is reduced to the following
system of five equations:
\begin{eqnarray}\label{abc4}
\left\{\begin{array}{ll}
    \textstyle\sum_{i=1}^4\eta_i \left[(2S-\gamma)A_{2,i}+ (R \eta_i + \gamma\eta_i^2) B_{2,i}\right]=0,\\
    \textstyle\sum_{i=1}^4 \eta_i^2\left[R\eta_i^{-1}A_{2,i}+( 2S+ R \eta_i + \gamma\eta_i^2) B_{2,i}\right]=1, \\
    \textstyle\sum_{i=1}^4\eta_i^3\left[(\gamma\eta_i^{-2}
    ) A_{2,i}+ (R\eta_i^{-1}+2S+R \eta_i+\gamma\eta_i^2)B_{2,i}\right]=0, \\
     \textstyle\sum_{i=1}^4\eta_i^{n-1}\big[\gamma \eta_i^{-2}+R \eta_i^{-1} + 2S-\gamma  +(R+ 2\gamma) \eta_i\big] B_{2,i}=0, \\
     \textstyle\sum_{i=1}^4\eta_i^{n}\big[\gamma \eta_i^{-2}+(R+2\gamma) \eta_i^{-1} + S-2\gamma-\i t \big] B_{2,i}=0.
 \end{array} \right.
\end{eqnarray}
We remark that, although the above system is underdetermined, $G_{h,j,2}$ is uniquely determined by \eqref{eGh}. As a matter of fact, \eqref{abc4} can be viewed as a system of five unknowns $B_{2,i}, i=1,2,3,4$ and $\sum_{i=1}^4\eta_i A_{2,i}$. As we just mentioned, a solution of \eqref{abc4} can be obtained from \eqref{eAB} with $m=2$, because of the following facts. The last three equations of \eqref{abc4} are the
same as the last three equations of \eqref{abc1} (with $m=2$). The
first equation of \eqref{abc4} can be obtained from the sum of the
first equation of \eqref{abc1} and the fourth equation of
\eqref{abc2} (with $m=2$). Similarly, the second equation of \eqref{abc4} by substracting the second equation of
\eqref{abc2} from the fifth equation of \eqref{abc1} (with $m=2$).

For $m=1$, the system \eqref{acc1} is reduced to the
system of four equations: $(V_1+V_2) B_1=z$.

In the following, we will solve \eqref{eAB}. First, assuming that the matrices used are all invertible, implying that their
determinants are not equal zero. Then, we will get $A_m=-V^{-1}V_2U_m^{-1} z$, $B_m=V^{-1}V_1 U_m^{-1} z$, $1<m\leq n$, and we can
also know $B_1=V^{-1} z$, where $V=V_1+V_2$.

\textbf{Step 2. Estimating $a_i$ and $b_i$.} In order to estimate $A_m$ and $B_m$, we prove in this step the following assertions:
\begin{align}
&|a_1|=|a_2|\leq t^2,\ \ a_3=a_4\eta_4^{-2n},\ \ |a_4|\geq 6|\eta_4|^{n},\label{aaa3} \\
&|b_1|>\frac{5}{3}t,\ \ |b_2|<\frac{t^2}{3},\ \ |b_3|<\frac{2}{3}|\eta_4|^{1-n},\ \ |b_4|<\frac{3}{2}|\eta_4|^{n},\label{aaa4}\\
&|a_1b_2-a_2b_1|=\abs{t^2(\eta_1-\eta_2)}\le 2t^2, \ \ \abs{a_3b_4-b_3a_4}\le 2t^2\abs{\eta_4}^{-n}\abs{a_4}.\label{aaa5}
\end{align}
where $\eta_4$ satisfies \eqref{bbb2}.

It follows from \eqref{ac} that
\begin{align*}
\abs{a_1}=\abs{2\cos t_h^{-}-2}\abs{\eta_1^n}\le t^2,\ \ \abs{a_2}=\abs{2\cos t_h^{-}-2}\abs{\eta_2^n}\le t^2.
\end{align*}
Using the identity $\eta_3=\eta_4^{-1}$ and \eqref{eai} we get
\begin{align*}
a_3=\eta_3^n(\eta_3+\eta_4-2)=\eta_4^{-2n}a_4.
\end{align*}
It follows from \eqref{bbb3} and \eqref{eai} that
\begin{align*}
\abs{a_4}=\abs{\eta_4^n(\eta_3+\eta_4-2)}=\abs{\eta_4^n}\abs{2\cos
t_h^{+}-2}\geq6\abs{\eta_4^n}.
\end{align*}
Therefore \eqref{aaa3} holds.

Next, we turn to prove \eqref{aaa4}. Noting that $0<\abs{\ga}< 1/6$,
from \eqref{ebi}, \eqref{bbb1}, and \eqref{e3.3} we have
\begin{align*}
\abs{b_2}&=\abs{1-\frac{t^2}{3}-\Big(1+\frac{t^2}{6}\Big)\eta_2^{-1}+\gamma(2\cos t_h^{-}-2)\eta_2^{-1}-\i t}\\
\nn&=\abs{1-\frac{t^2}{3}-\Big(1+\frac{t^2}{6}\Big)\eta_2^{-1}+\frac{1+\frac{t^2}{6}-\sqrt{\big(1+\frac{t^2}{6}\big)^2+4\gamma
t^2}}{2}\eta_2^{-1}-\i t}\\
\nn&\leq\abs{1-\frac{t^2}{3}-\frac{1+\frac{t^2}{6}+\sqrt{\big(1+\frac{t^2}{6}\big)^2+4\gamma
t^2}}{2}\cos t_h^{-}} \\
\nn &\ \ \ \ \ \ \ \ \ +\abs{\frac{1+\frac{t^2}{6}+\sqrt{\big(1+\frac{t^2}{6}\big)^2+4\gamma
t^2}}{2}\sin t_h^{-}-t} :=\rm{(I)}+(II),
\end{align*}
where
\begin{align*}
\textnormal{(I)}=&\abs{1-\frac{t^2}{3}-\frac{1+\frac{t^2}{6}+\sqrt{\big(1+\frac{t^2}{6}\big)^2+4\gamma
t^2}}{2}\bigg(1-\frac{t^2}{1+\frac{t^2}{6}+\sqrt{\big(1+\frac{t^2}{6}\big)^2+4\gamma
t^2}}\bigg)}\\
=&\abs{\frac{\sqrt{\big(1+\frac{t^2}{6}\big)^2+4\gamma
t^2}-1-\frac{t^2}{6}}{2}}\le\frac{t^2}{6},
\end{align*}
\begin{align*}
\rm{(II)}=&\abs{\frac{1+\frac{t^2}{6}+\sqrt{\big(1+\frac{t^2}{6}\big)^2+4\gamma
t^2}}{2}\sqrt{1-(\cos t_h^{-})^2}-t}\\
=&\abs{\frac{\sqrt{2t^2\Big(1-\frac{t^2}{3}+\sqrt{\big(1+\frac{t^2}{6}\big)^2+4\gamma
t^2}\Big)}-2t}{2}}\\
=&\frac{\abs{-\frac{1}{3}-\frac{t^2}{12}+4\gamma
}t^3}{\Big(\sqrt{2\Big(1-\frac{t^2}{3}+\sqrt{\big(1+\frac{t^2}{6}\big)^2+4\gamma
t^2}\Big)}+2\Big)\Big(1+\frac{t^2}{3}+\sqrt{\big(1+\frac{t^2}{6}\big)^2+4\gamma
t^2}\Big)}< \frac{t^3}{6},
\end{align*}
we therefore arrive at
\begin{equation}
\abs{b_2}\leq \textnormal{(I)+(II)}<\frac{t^2}{3}.
\end{equation}
Noting that $\bar\eta_2=\eta_1$, it is clear that
\begin{align*}
b_1=\eta_1^n\Big(1-\frac{t^2}{3}-\Big(1+\frac{t^2}{6}\Big){\eta_1^{-1}}+\gamma(1-{\eta_1^{-1}})^2-\i
t\Big)=\bar{b}_2-2\i t\eta_1^n.
\end{align*}
Obviously, $|b_1|\ge 2t-\abs{b_2}>\frac{5}{3}t$. From \eqref{bbb2},
\begin{align*}
\abs{b_3}&=\abs{\eta_3^n\Big(1-\frac{t^2}{3}-\Big(1+\frac{t^2}{6}\Big)\eta_3^{-1}+\gamma(1-\eta_3^{-1})^2-\i t\Big)}\\
\nn&=\abs{\eta_3^n\Big(1-\frac{t^2}{3}-\Big(1+\frac{t^2}{6}\Big)\eta_3^{-1}+\gamma(2\cos t_h^{+}-2)\eta_3^{-1}-\i t\Big)}\\
\nn &=\abs{\eta_3^n}\abs{1-\frac{t^2}{3}+\frac{2\gamma
t^2}{1+\frac{t^2}{6}+\sqrt{\big(1+\frac{t^2}{6}\big)^2+4\gamma
t^2}}\eta_3^{-1}-\i t}\\
&\le\abs{\eta_3^{n-1}}\bigg(\frac13\abs{1-\frac{t^2}{3}-\i t}+\frac{
t^2}{6}\bigg)
<\frac{2}{3}\abs{\eta_3}^{n-1}=\frac{2}{3}\abs{\eta_4}^{1-n}.
\end{align*}
Similarly,
\begin{equation*}
\abs{b_4}=\abs{\eta_4^n}\abs{1-\frac{t^2}{3}+\frac{2\gamma
t^2}{1+\frac{t^2}{6}+\sqrt{\big(1+\frac{t^2}{6}\big)^2+4\gamma
t^2}}\eta_4^{-1}-\i t}<\frac{3}{2}\abs{\eta_4^n}.
\end{equation*}
This completes the proof of \eqref{aaa4}.

It remains to prove \eqref{aaa5}. We derive from  \eqref{eai}--\eqref{ebi}, \eqref{bbb1}, and \eqref{e3.3} that
\begin{align*}
|a_1b_2-a_2b_1|&=\Big|(\eta_1+\eta_2-2)\Big(1-\frac{t^2}{3}-\Big(1+\frac{t^2}{6}\Big)\eta_1+\gamma(1-\eta_1)^2-\i t\Big)\\
& -(\eta_1+\eta_2-2)\Big(1-\frac{t^2}{3}-\Big(1+\frac{t^2}{6}\Big)\eta_2+\gamma(1-\eta_2)^2-\i t\Big)\Big|\\
& =\abs{(\eta_1+\eta_2-2)\Big(\ga (\eta_1+\eta_2-2)(\eta_1-\eta_2)-\Big(1+\frac{t^2}{6}\Big)(\eta_1-\eta_2)\Big)}\\
&=\abs{t^2(\eta_1-\eta_2)}\le 2t^2.
\end{align*}
Similarly,
\begin{align*}
a_3b_4-b_3a_4&=t^2(\eta_3-\eta_4)=t^2a_4\eta_4^{-n}\frac{\eta_3-\eta_4}{\eta_3+\eta_4-2}=t^2a_4\eta_4^{-n}\frac{1-\eta_4^2}{1+\eta_4^2-2\eta_4},
\end{align*}
hence, again from \eqref{bbb2},
\begin{equation*}
\abs{a_3b_4-b_3a_4}=t^2\abs{a_4\eta_4^{-n}\frac{1-\eta_4}{1+\eta_4}}\le t^2\abs{a_4\eta_4^{-n}}\frac{\abs{\eta_4}+1}{\abs{\eta_4}-1}\le 2t^2\abs{\eta_4}^{-n}\abs{a_4}.
\end{equation*}
This completes the proof of \eqref{aaa5}.

\textbf{Step 3. Estimating $A_m$ and $B_m$.} Since $t=kh\leq 1$ and $k\geq 1$, from \eqref{bbb2}, we have
\begin{align}\label{aaa2}
 |\eta_4|^{-n}<\Big(\frac{1}{3}\Big)^{\frac{1}{h}}\leq
\Big(\frac{1}{3}\Big)^{\frac{1}{t}}\le \frac{1}{3}t.
\end{align}

Next we estimate $\dfrac1{\det V}$. By some simple calculation, we have
\begin{equation}\label{edetV}
\det V= [(\eta_3+\eta_4)-(\eta_1+\eta_2)][ (a_2-a_1)(b_4-b_3)-
(b_2-b_1)(a_4-a_3)]
\end{equation}
where $a_i$ and $b_i$ is defined by \eqref{eai} and \eqref{ebi} respectively.
We analyze and estimate each term of $\det V$. From \eqref{aaa4}, it is clear that
 $\big|\dfrac{b_2}{b_1}\big|<\dfrac{t}{5}$. Hence,
\begin{equation}\label{aa3}
b_1-b_2=b_1(1+\theta_1 t) \ \  \text{where $\theta_1$ is a general function satisfying }
|\theta_1|<\frac{1}{5}.
\end{equation}
It follows from \eqref{aaa2}, \eqref{aaa3}, and \eqref{aa3} that
\begin{equation}\label{bb3}
(b_1-b_2)(a_4-a_3)=b_1 a_4 (1+\theta_2 t)  \ \  \text{where $\theta_2$ is a general function and }
|\theta_2|<\frac{1}{3}.
\end{equation}
From \eqref{aaa3}--\eqref{aaa4} and \eqref{aaa2}, we have
\begin{equation*}
|(a_2-a_1)(b_4-b_3)|\leq \frac{2}{3}t^2 |a_4|\leq \frac{2}{5}t|b_1a_4|.
\end{equation*}
It follows from \eqref{edetV}, \eqref{bb3}, and the above inequality that
\begin{equation*}
\det{V}=b_1 a_4 [(\eta_3+\eta_4)-(\eta_1+\eta_2)](1+ \theta_3 t),
\end{equation*}
where $\theta_3$ is a general function and $\abs{\theta_3}<\dfrac{11}{15}$. Therefore from Lemma~ \ref{LA1},
\begin{align}\label{Vinv}
\frac{1}{\det V} &= \frac{1+ \widetilde{C}t} {b_1 a_4
\sigma},
\end{align}
where $\sigma:=\eta_3+\eta_4-(\eta_1+\eta_2)$. Note from \eqref{e3.3} and \eqref{bbb1} that
\begin{align}\label{esigma}
\frac{1}{\sigma}=\frac{\gamma }{\sqrt{\left(1+\frac{t^2}{6}\right)^2+4\gamma t^2}}= \gamma(1+ \widetilde{C}t^2).
\end{align}

In order to estimate $B_1$, we consider the first column of $V^*$, the adjugate of $V$. From \eqref{aaa2} and \eqref{aaa3}--\eqref{aaa5}, by some calculations,  we have
\begin{align*}
V^*(:,1)=\left(\begin{array}{c}
a_3b_4-b_3a_4+b_2a_4-a_2b_4+a_2b_3-b_2a_3\\
-b_1a_4+a_1b_4+ b_3a_4-a_3b_4+b_1a_3-a_1b_3\\
b_1a_4-a_1b_4+a_2b_4-b_2a_4+a_1b_2-a_2b_1\\
a_3b_2-a_2b_3+a_1b_3-b_1a_3+a_2b_1-a_1b_2 \end{array}\right)
=\left(\begin{array}{c}
b_1 a_4\widetilde{C}t  \\
-b_1 a_4(1+\widetilde{C}t)  \\
 b_1 a_4 (1+\widetilde{C}t)  \\
 \eta_4^{-n} b_1 a_4 \widetilde{C}t  \end{array}\right),
\end{align*}
hence, from \eqref{Vinv} and \eqref{esigma},
\begin{align}\label{aaaa1}
B_1=V^{-1}z=\frac{1}{\det V}V^*z=\frac{1}{\det V}\Big(-\frac{1}{\ga} \Big)V^*(:,1)=\left(\begin{array}{c}\widetilde{C}t\\
1+\widetilde{C}t\\-1+\widetilde{C}t\\ \eta_4^{-n} \widetilde{C}t
\end{array}\right).
\end{align}

We turn to estimate $A_m$ and $B_m$ for $m>1$. It follows from the definitions of $U_m$ and $z$ that,
\begin{equation}\label{aa1}
U_m^{-1}z=-\frac{1}{\ga}\left( \begin
{array}{c}  \dfrac{\eta_1^{2-m}\eta_2\eta_3\eta_4}{(\eta_3-\eta_1)(\eta_4-\eta_1)(\eta_2-\eta_1)}\\
\noalign{\medskip}  \dfrac{\eta_2^{2-m}\eta_1\eta_3\eta_4}{(\eta_3-\eta_2)(\eta_4-\eta_2)(\eta_1-\eta_2)} \\
\noalign{\medskip} \dfrac{\eta_3^{2-m}\eta_1\eta_2\eta_4}{(\eta_1-\eta_3)(\eta_2-\eta_3)(\eta_4-\eta_3)}\\
\noalign{\medskip} \dfrac{\eta_4^{2-m}\eta_1\eta_2\eta_3}{(\eta_1-\eta_4)(\eta_2-\eta_4)(\eta_3-\eta_4)}\end{array} \right)=(1+ \widetilde{C}t^2)\left( \begin
{array}{c}\dfrac{\eta_2^{m}}{\eta_2-\eta_1}\\
\noalign{\medskip} \dfrac{\eta_1^{m}}{\eta_1-\eta_2}\\
\noalign{\medskip} \dfrac{\eta_4^{m}}{\eta_3-\eta_4}\\
\noalign{\medskip} \dfrac{\eta_3^{m}}{\eta_4-\eta_3}\end{array} \right),
\end{equation}
where we have used \eqref{bbb1} and \eqref{esigma} to derive the last equality.

Next we estimate $V^*V_1$. Clearly, $V_1(:,2)=V_1(:,1)$, $V_1(:,4)=V_1(:,3)$, and so is $V^*V_1$. It follows from \eqref{aaa3}--\eqref{aaa5} and \eqref{aaa2} that,
\begin{align}\label{aa2}
&V^*V_1(:,[1,3])=V^*V_1(:,[2,4])\\
\nn&=\sigma \left(\begin{array}{cc}
 a_2b_4-a_2b_3-b_2a_4+b_2a_3  & a_3b_4-b_3a_4 \\
 a_1b_3-a_1b_4+b_1a_4-b_1a_3  & b_3a_4-a_3b_4 \\
 a_2b_1-a_1b_2  & a_2b_4-a_1b_4-a_4b_2+a_4b_1 \\
 a_1b_2-b_1a_2  & a_3b_2-a_3b_1-b_3a_2+b_3a_1
 \end{array}\right)\\
\nn&=\sigma b_1 a_4 \left(\begin{array}{cccc}
\widetilde{C}t\ & \eta_4^{-n}\widetilde{C} t  \\
1+\widetilde{C}t & \eta_4^{-n}\widetilde{C} t \\
\eta_4^{-n}  (\eta_1-\eta_2)\widetilde{C}t & 1+\widetilde{C}t \\
\eta_4^{-n} (\eta_1-\eta_2) \widetilde{C}t &-\eta_4^{-2n}(1-\eta_4\widetilde{C}t)
\end{array}\right).
\end{align}
From \eqref{Vinv},  \eqref{aa1}, \eqref{aa2}, \eqref{bbb2}, and $\abs{\eta_1}=\abs{\eta_2}=1$, we have
\begin{align}\label{eBm}
B_m&=V^{-1}V_1U_m^{-1}z=\frac{1}{\det V}V^*V_1U_m^{-1}z\\
&=(1+\widetilde{C}t)\left( \begin{array}{c}
\dfrac{\eta_2^m-\eta_1^m}{\eta_2-\eta_1}\,\widetilde{C}t+\widetilde{C}t\\
\dfrac{\eta_2^m-\eta_1^m}{\eta_2-\eta_1}(1+\widetilde{C}t)+\widetilde{C}t\\
\eta_4^{-n}\widetilde{C}t+\eta_4^{m-1}\widetilde{C}\\
\eta_4^{-n}\widetilde{C}t+\eta_4^{-2n-1+m}\widetilde{C}\end{array} \right)
=\left( \begin{array}{c}
\dfrac{\widetilde{C}t}{\sin t_h^-}\\
\dfrac{\sin(m t_h^-)+\widetilde{C}t}{\sin t_h^-}\\
\eta_4^{m-1}\widetilde{C}\nn\\
\eta_4^{-n}\widetilde{C}t+\eta_4^{-2n-1+m}\widetilde{C}\end{array} \right).
\end{align}
Similarly, again from \eqref{aaa2}, \eqref{Vinv} and \eqref{aaa3}--\eqref{aaa5},
\begin{align}\label{aa4}
&(V^{*}V_2)([1,3],:)=-(V^{*}V_2)([2,4],:)\\
=&\sigma b_1a_4 \left(\begin{array}{cccc}
1+\widetilde{C}t & \widetilde{C}t & \eta_4^{-n}\widetilde{C} t & \eta_4^{-n}\widetilde{C} t \\
\eta_4^{-n}  (\eta_1-\eta_2)\widetilde{C}t & \eta_4^{-n} (\eta_1-\eta_2)\widetilde{C}t & \eta_4^{-2n}(\eta_4\widetilde{C}t-1) & -1+\widetilde{C} t\\
\end{array}\right).\nn
\end{align}
It follows from \eqref{Vinv}, \eqref{aa1} and \eqref{aa4} that,
\begin{align}\label{eAm}
A_m([1,3])=-A_m([2,4])&=-\frac{1}{\det V}(V^{*}V_2)([1,3],:)U_m^{-1}z\\
&=\left(\begin{array}{c}
\dfrac{\eta_2^{m}}{\eta_1-\eta_2}+\dfrac{\widetilde{C}t}{\sin t_h^-}\\
\eta_4^{-m}\widetilde{C}t+\eta_4^{-m-1}\widetilde{C}\\
\end{array}\right).\nn
\end{align}
\textbf{Step 4. Finishing up.} It is time to consider $H_{h,j,m}$.
Let $w_{1}^T=[\eta_1, \eta_2, \eta_3, \eta_4]$, $w_{j}^T=[(\eta_1-1)\eta_1^{j-1}, (\eta_2-1)\eta_2^{j-1},
(\eta_3-1)\eta_3^{j-1}, (\eta_4-1)\eta_4^{j-1}]$ for $j>1$. From \eqref{eGh}, \eqref{eHh}, \eqref{eBm}, and \eqref{eAm}, we have,
for $m=1$,
\begin{align*}
H_{h,1,1}&=G_{h,1,1}=w_{1}^T B_1=e^{\i t_h^-}+\widetilde{C}\eta_4^{-1}+\widetilde{C}t=\widetilde{C}=\i \sin (t_h^-)e^{\i t_h^-}+\widetilde{C}t+\widetilde{C} ,\\
H_{h,j,1}&=G_{h,j,1}-G_{h,j-1,1}=w_{j}^T B_1=\i \sin (t_h^-) e^{\i j t_h^-}+\widetilde{C}t+\widetilde{C}\eta_4^{1-j},\ \ j>1,
\end{align*}
and for $m>1$,
\begin{align*}
H_{h,1,m}&=G_{h,1,m}=w_{1}^T A_m=\cos(t_h^-)e^{\i mt_h^-}+\widetilde{C}t+\widetilde{C}\eta_4^{1-m},\\
H_{h,j,m}&=G_{h,j,m}-G_{h,j-1,m}=w_{j}^T A_m=\cos(j t_h^-)e^{\i m t_h^-}+\widetilde{C}t+\widetilde{C}\eta_4^{j-m},\ \ \ 1<j<m,\\
H_{h,j,m}&=G_{h,j,m}-G_{h,j-1,m}=w_{j}^T B_m=\i \sin(m t_h^-)e^{\i j t_h^-}+\widetilde{C}t+\widetilde{C}\eta_4^{m-j},\ \ j>m,\\
H_{h,m,m}&=G_{h,m,m}-G_{h,m-1,m}=\sum_{i=1}^4 B_{m,i}\eta_i^m-\sum_{i=1}^4 A_{m,i}\eta_i^{m-1}\\
&=w_{m}^T B_m+\sum_{i=1}^4 (B_{m,i}-A_{m,i})\eta_i^{m-1}=w_{m}^T B_m\\
&=\i \sin(m t_h^-)e^{\i m t_h^-}+\widetilde{C}t+\widetilde{C},
\end{align*}
where we have used $\sum_{i=1}^4 (B_{m,i}-A_{m,i})\eta_i^{m-1}=0$ (cf. the sixth equation in \eqref{abc2}). This completes the proof of the lemma.
\end{proof}

From Lemma~\ref{lHh2} and \eqref{euh'1}, we have
\begin{align}\label{bb4}
    u_h'(x)=&\sum_{m=1}^{n} H_{h,j,m}(f, \phi_m)=\sum_{m=1}^{j}\i \sin{(m t_{h}^{-})}e^{\i j t_{h}^{-}}(f,\phi_m)\\
  \nn & +\sum\limits_{m=j+1}^{n}\cos{(j t_{h}^{-})}e^{\i m t_{h}^{-}}(f,\phi_m)+ t\sum\limits_{m=1}^{n}\widetilde{C}\,(f,\phi_m)\\
   \nn & +\sum\limits_{m=1}^{n}{\eta_4}^{-\abs{m-j}}\widetilde{C}\,(f,\phi_m),\quad \forall x\in [x_{j-1},x_j], \ 1\leq j\leq n.
\end{align}
Comparing with the continuous case \eqref{eu'H} we see that the
first two contributions in the right hand side of \eqref{bb4}
consists of the discrete travelling wave, whereas the last two
perturbed terms will be shown to be of the same order as the
interpolation error.

\section{Stability and Pre-asymptotic error estimates for the CIP-FEM}
In this section, we
consider the stability and error estimates of the CIP-FEM solution in the discrete semi-norm $\norm{\cdot}_{1,h}$ for real penalty parameters.

\begin{mythm}\label{tsta}
Under the conditions of Lemma~\ref{lHh2}, the CIP method
\eqref{eipdg} attains a unique solution $u_h$ that satisfies the
stability estimate
\begin{equation}
\norm{u_h}_{1,h}\lesssim \norm{f}.
\end{equation}
\end{mythm}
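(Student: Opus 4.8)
The plan is to read $u_h'$ off the explicit representation \eqref{bb4}, exploit that $u_h'$ is piecewise constant, and reduce the entire $\norm{\cdot}_{1,h}$ bound to a uniform estimate on the constant values of $u_h'$. Write $g_m:=(f,\phi_m)$ and let $D_j$ denote the constant value of $u_h'$ on $K_j$, so that $\norm{u_h}_{1,h}^2=\sum_{j=1}^n h\abs{D_j}^2+\sum_{j=1}^{n-1}\abs{\ga}h\abs{[u_h']_j}^2$. Since $[u_h']_j=D_j-D_{j+1}$, the elementary inequality $\abs{D_j-D_{j+1}}^2\le 2\abs{D_j}^2+2\abs{D_{j+1}}^2$ gives $\sum_{j=1}^{n-1}\abs{[u_h']_j}^2\le 4\sum_{j=1}^n\abs{D_j}^2$, so the jump part is bounded by $4\abs{\ga}\norm{u_h'}^2$. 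As $\abs{\ga}\le 1/6$ under the hypotheses of Lemma~\ref{lHh2}, this yields $\norm{u_h}_{1,h}\lesssim\norm{u_h'}$, and it suffices to prove $\norm{u_h'}\lesssim\norm{f}$. Moreover $\norm{u_h'}^2=h\sum_{j=1}^n\abs{D_j}^2\le\max_j\abs{D_j}^2$ (using $nh=1$), so in fact it is enough to show $\abs{D_j}\lesssim\norm{f}$ uniformly in $j$.

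Next I would record two data estimates for the coefficients $g_m$. Because each $\phi_m$ has support of width $\le 2h$, $\norm{\phi_m}\lesssim h^{1/2}$, and the supports overlap boundedly, Cauchy--Schwarz on each support gives $\sum_{m=1}^n\abs{g_m}^2\lesssim h\norm{f}^2$. A further Cauchy--Schwarz over the $n$ indices, again using $nh=1$, then yields $\sum_{m=1}^n\abs{g_m}\lesssim n^{1/2}\big(\sum_m\abs{g_m}^2\big)^{1/2}\lesssim\norm{f}$.

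With these in hand the uniform bound on $D_j$ follows directly from \eqref{bb4}. Recall $t_h^-\in(0,\tfrac{\pi}{2})$ is real (cf. \eqref{ac}), so $\abs{\sin(mt_h^-)},\abs{\cos(jt_h^-)},\abs{e^{\i m t_h^-}}\le 1$; hence the two travelling-wave sums are each bounded by $\sum_m\abs{g_m}\lesssim\norm{f}$. The term $t\sum_m\widetilde{C}\,g_m$ is bounded by $t\sum_m\abs{g_m}\lesssim\norm{f}$, since $\widetilde{C}$ is uniformly bounded and $t\le 1$. For the last term I use $\abs{\eta_4}>3$ (cf. \eqref{bbb2}), so that $\abs{\eta_4^{-\abs{m-j}}}<3^{-\abs{m-j}}$ and $\sum_m 3^{-\abs{m-j}}\le 2$; thus $\big|\sum_m\eta_4^{-\abs{m-j}}\widetilde{C}\,g_m\big|\lesssim\max_m\abs{g_m}\lesssim\norm{f}$. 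Summing the four contributions gives $\abs{D_j}\lesssim\norm{f}$ for every $j$, hence $\norm{u_h'}\lesssim\norm{f}$ and, by the first paragraph, $\norm{u_h}_{1,h}\lesssim\norm{f}$.

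Existence and uniqueness of $u_h$ come from the explicit construction of $G_h=L_h^{-1}$ in Lemma~\ref{lHh2}, the relevant determinants (e.g.\ $\det V$ via \eqref{Vinv}) having been shown nonzero there. The only point needing care — and where a naive argument would stall — is the jump seminorm: one is tempted to estimate $[u_h']_j$ by differencing \eqref{bb4} term by term, which is awkward because the generic functions $\widetilde{C}$ depend on $j$. The reduction in the first paragraph sidesteps this entirely, since for bounded $\abs{\ga}$ the jump part is automatically dominated by $\norm{u_h'}^2$. The genuine content of the proof is therefore just the uniform bound $\abs{D_j}\lesssim\norm{f}$, whose only mildly delicate ingredients are the data estimate $\sum_m\abs{g_m}\lesssim\norm{f}$ and the geometric (discrete convolution) bound for the $\eta_4$ term; all the hard structural work resides already in Lemma~\ref{lHh2}.
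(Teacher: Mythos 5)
Your proof is correct and follows essentially the same route as the paper's: both read the uniform bound $\abs{u_h'(x)}\lesssim\sum_m\abs{(f,\phi_m)}\lesssim\norm{f}$ off the explicit representation \eqref{bb4} and then control the jump seminorm by the one-sided values of $u_h'$ rather than by differencing (the paper via the triangle inequality, you via $\abs{D_j-D_{j+1}}^2\le 2\abs{D_j}^2+2\abs{D_{j+1}}^2$, which is the same idea). The only difference is that you spell out the data estimate $\sum_m\abs{(f,\phi_m)}\lesssim\norm{f}$ and the geometric-sum treatment of the $\eta_4$ term, which the paper leaves implicit.
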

\begin{proof}
Let us estimate each term in the definition of $\norm{\cdot}_{1,h}$(cf. \eqref{enorm2}). First, from \eqref{bb4}, it is clear that
\begin{equation}
\nn \abs{u_h'(x)}\lesssim \sum_{m=1}^{n} \abs{(f, \phi_m)}\lesssim \norm{f} ,\quad  \forall x\in
[x_{j-1},x_j] ,\quad  j=1,\cdots,n ,
\end{equation} and hence,
\begin{equation}
\norm{u_h'}\lesssim \norm{f}.
\end{equation}
Secondly,
\begin{equation}
\nn\abs{[u_h']_j}=\abs{u_h'(x_j+)-u_h'(x_j-)}\leq
\abs{u_h'(x_j+)}+\abs{u_h'(x_j-)}\lesssim \norm{f},
\end{equation}
which impies
\begin{equation}
\nn\sum_{j=1}^{n-1} |\gamma| h\abs{[u_h']_j}^2\lesssim \norm{f}^2.
\end{equation}Therefore,
\begin{equation}
\nn\norm{u_h}_{1,h}={(\abs{u_h}_{1,h}^2+\sum_{j=1}^{n-1} |\gamma|
h\abs{[u_h']_j}^2)}^\frac{1}{2}\lesssim \norm{f}.
\end{equation}
This completes the proof of Theorem~\ref{tsta}.
\end{proof}
\begin{myrem} This stability estimate for the CIP-FEM (as well as FEM) is of the same order as that of the continuous problem (cf. \eqref{e2.2b}). Note that the estimate holds for real penalty parameters in $[-\frac16, \frac16]$ under the condition $kh\le 1$ in current one-dimensional setting. The same result has been proved for the one-dimensional FEM in \cite{F.I.I}. For stability estimates of the CIP-FEM for higher-dimensional problems, we refer to  \cite{H.Wu} which, particularly, gives estimates for imaginary penalty parameters under the condition  $k^3h^2\lesssim 1$.
\end{myrem}

\begin{mythm}\label{terr3}
Under the conditions of Lemma~\ref{lHh2},
\begin{equation}\label{error1}
    \norm{u-u_h}_{1,h} \lesssim (k h+ \abs{k_h^- -k} )\norm{f}\lesssim (k h+ k^3 h^2 )\norm{f}.
\end{equation}
    If, furthermore, $\gamma=-\frac{1}{12}$,\quad then
\begin{equation}
    \norm{u-u_h}_{1,h} \lesssim (k h+ k^5 h^4 )\norm{f}.
\end{equation}
    If, furthermore, $\abs{\gamma-\gamma_o}\lesssim \frac{1}{k^2 h}$,\quad then
\begin{equation}\label{error3}
    \norm{u-u_h}_{1,h} \lesssim k h \norm{f}.
\end{equation}
    Here $\ga_o$ is defined in Lemma~ \ref{lkh3}.
\end{mythm}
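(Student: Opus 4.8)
The plan is to read the error off the explicit representation \eqref{bb4} of $u_h'$ and compare it, term by term, with the Green's representation \eqref{eu'H} of $u'$. Since $u\in H^2(0,1)$ has a continuous derivative, the jumps $[u']_j$ vanish, so by \eqref{enorm2} one has $\norm{u-u_h}_{1,h}^2=\norm{(u-u_h)'}^2+\sum_{j=1}^{n-1}\abs{\ga}h\abs{[u_h']_j}^2$, and I would bound the two pieces separately. In each piece the discrepancy splits into three contributions that are already visible in \eqref{bb4}: a \emph{phase error} from replacing $k$ by $k_h^{-}$, a \emph{quadrature error} from replacing the integrals $\int(\cdot)f\ds$ by the sums $\sum_m(\cdot)(f,\phi_m)$, and the two explicit remainders $\widetilde{C}t$ and $\widetilde{C}\eta_4^{-\abs{m-j}}$.

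For the $L^2$-part, on each $K_j=(x_{j-1},x_j)$ the constant $u_h'$ given by \eqref{bb4} is compared with $u'(x_j)$; the in-cell variation $\int_{K_j}\abs{u'(x)-u'(x_j)}^2\dx$ sums to a best-approximation term which, using $\abs{u}_2\le(1+k)\norm{f}$ from \eqref{e2.2c}, is $\lesssim(kh)\norm{f}$. The leading part of \eqref{bb4}, namely $\i e^{\i k_h^{-}x_j}\sum_{m\le j}\sin(k_h^{-}x_m)(f,\phi_m)+\cos(k_h^{-}x_j)\sum_{m>j}e^{\i k_h^{-}x_m}(f,\phi_m)$, is matched against $u'(x_j)$. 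First I would freeze the wavenumber: replacing each $k_h^{-}$ by $k$ costs, factor by factor, at most $\abs{k_h^{-}-k}\,x_m\le\abs{k_h^{-}-k}$, so with $\sum_m\abs{(f,\phi_m)}\lesssim\norm{f}$ the phase error is $\lesssim\abs{k_h^{-}-k}\norm{f}$ at each node; multiplying by $h$ and summing the $n=1/h$ nodes reproduces the bound $\abs{k_h^{-}-k}\norm{f}$. Then, with $k$ frozen, each partial sum equals $\int f\,(I_h g)\ds$ for $g=\sin(k\,\cdot)$ or $g=e^{\i k\,\cdot}$, so its difference from the corresponding integral in \eqref{eu'H} is $\int f\,(I_h g-g)\ds$, bounded by $\norm{f}\norm{I_h g-g}\lesssim(kh)^2\norm{f}$, which after $h\sum_j$ has order at most $(kh)^2\norm{f}\le(kh)\norm{f}$. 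The two remainders are controlled crudely: the term $t\sum_m\widetilde{C}(f,\phi_m)$ is $\lesssim(kh)\norm{f}$ per node, hence $\lesssim(kh)\norm{f}$ after $h\sum_j$, while the geometric term is a discrete convolution against $\abs{\eta_4}^{-\abs{m-j}}\lesssim(1/3)^{\abs{m-j}}$, so Young's inequality and $\sum_m\abs{(f,\phi_m)}^2\lesssim h\norm{f}^2$ give $\lesssim h\norm{f}\le(kh)\norm{f}$. Collecting these yields $\norm{(u-u_h)'}\lesssim(kh+\abs{k_h^{-}-k})\norm{f}$.

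For the jump part I would show that consecutive cell values of \eqref{bb4} differ by $O(kh)$: in the difference of the two leading sums across $x_j$, the increments of $\sum_{m\le j}$ and $\sum_{m>j}$ each contribute a single term $\lesssim\abs{(f,\phi_{j+1})}$, whereas $e^{\i k_h^{-}x_{j+1}}-e^{\i k_h^{-}x_j}$ and $\cos(k_h^{-}x_{j+1})-\cos(k_h^{-}x_j)$ are $O(t_h^{-})=O(kh)$ times bounded partial sums. Together with the remainders this gives $\abs{[u_h']_j}\lesssim(kh)\norm{f}+\abs{(f,\phi_{j+1})}$, whence $\sum_{j=1}^{n-1}\abs{\ga}h\abs{[u_h']_j}^2\lesssim(kh)^2\norm{f}^2$, which is inside the target. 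This establishes the master estimate $\norm{u-u_h}_{1,h}\lesssim(kh+\abs{k_h^{-}-k})\norm{f}$, and the three displayed bounds follow at once by inserting the phase estimates of Lemma~\ref{lkh3}: $\abs{k_h^{-}-k}\lesssim k^3h^2$ in general, $\lesssim k^5h^4$ when $\ga=-\tfrac1{12}$, and $\lesssim kh$ when $\abs{\ga-\ga_o}\lesssim 1/(k^2h)$.

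The step I expect to be the main obstacle is the bookkeeping at the interface $m=j$, where the discrete split between $\sum_{m\le j}$ and $\sum_{m>j}$ in \eqref{bb4} does not coincide with the continuous split at $s=x_j$ in \eqref{eu'H}, because $\phi_j$ straddles $x_j$; this mismatch must be shown to be genuinely $O(h)$ rather than $O(1)$. A secondary point is ensuring that every per-node estimate is uniform in $j$ in exactly the form needed for $h\sum_j(\cdot)^2$ to reproduce the stated order, and that the generic bounded functions $\widetilde{C}$ never hide a factor of $n$; this is precisely where the geometric decay $\abs{\eta_4}^{-\abs{m-j}}$ and the Cauchy--Schwarz and Young estimates must be applied with care.
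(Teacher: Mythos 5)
Your strategy coincides with the paper's: both proofs read the error off the discrete Green's function representation \eqref{bb4}, compare it term by term with \eqref{eu'H} (phase error from $k\mapsto k_h^-$, quadrature error, and the two remainders $\widetilde{C}t$ and $\widetilde{C}\eta_4^{-|m-j|}$), and then feed Lemma~\ref{lkh3} into a master bound $\norm{u-u_h}_{1,h}\lesssim (kh+\abs{k_h^--k})\norm{f}$. Your treatment of the geometric tail by Cauchy--Schwarz/Young is in fact slightly cleaner than the paper's, which truncates the tail at distance $q_0\sim\ln(1/t)$ and carries a $q_0^2h^2\norm{f}^2$ term.

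There is, however, a genuine gap at exactly the place you flag as ``the main obstacle'', and it is not mere bookkeeping. Writing $f=\sum_{m=0}^n f\phi_m$, the representation \eqref{bb4} has no $m=0$ term, so the comparison with $\int_0^1H(x,s)f(s)\ds$ leaves over, uniformly in $j$, the boundary contribution $\int_0^{x_1}H(x,s)f(s)\phi_0(s)\ds$, which (like the interface mismatch near $m=j$) is only $O\bigl(h^{1/2}\norm{f}_{L^2}\bigr)$ per node, not $O(h)$. After squaring and summing, the interface terms aggregate to $O(h)\norm{f}$, which is harmless since $h\le kh$; but the boundary term is the same on every cell and aggregates only to $O(h^{1/2})\norm{f}_{L^2(0,x_1)}$, which is \emph{not} $\lesssim kh\norm{f}$ unless $k^2h\gtrsim 1$. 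This is precisely why the paper's proof opens with a case split: it assumes $k^2h\gtrsim1$ throughout (this hypothesis is also what lets it absorb its $q_0^2h^2\norm{f}^2$ and $h\norm{f}^2$ terms into $(kh)^2\norm{f}^2$) and disposes of the complementary asymptotic regime $k^2h\lesssim1$ by the Schatz duality argument \cite{sch74}, under which \eqref{error1} and \eqref{error3} coincide. Your single-regime sketch silently assumes every per-node error is $O(kh)\norm{f}$ and, as written, fails on the boundary term when $k^2h\ll1$. (A single-regime repair is possible --- for $x>x_1$ one has $\abs{H(x,s)}=\abs{\sin(ks)}\le kh$ on $(0,x_1)$, so the boundary term is really $O(kh\cdot h^{1/2})\norm{f}$ except on $K_1$ --- but neither you nor the paper carries this out; you need either that refinement or the case split.)
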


\begin{proof}
Suppose $n \lesssim k^2$, that is, $k^2h\gtrsim 1$,  otherwise, \eqref{error3} is proved by using the Schatz argument \cite{sch74}. To estimate the last perturbed term in \eqref{bb4}, define $q_0$ to be the largest integer less than or equal to $-\ln t/\ln3$. From \eqref{bbb2}, it is clear that
\begin{equation}\label{aaa1}
 \abs{\eta_4}^{-q}<3^{-q}<
t \text{ for } q>q_0 \text{ and } q_0 \lesssim \ln k\lesssim k.
\end{equation}
 Define
\begin{align*}
 \phi_0:=\left\{ \begin{array}{ll}
\frac{x_1-x}{h}, &\quad 0 \leq x \leq x_1, \\
0,&\quad  x>x_1.
\end{array} \right.
\end{align*}
Denote by $x_j=0$ for $j<0$ and $x_j=1$ for $j>n$. We make use of the formulation
of $u'(x)$ in \eqref{eu'H} and the characterization of $u_h'(x)$ in
\eqref{bb4} to obtain: For $x\in K_j$, $j=1,2,\cdots,n$,
\begin{align*}
    &\abs{u'(x)-u_h'(x)}=\Big|\int_0^1H(x,s)f(s)\sum_{m=0}^n\phi_m(s)\ds-u_h'(x)\Big|\\
   &\lesssim \Big|\int_0^1H(x,s)f(s)\phi_0(s)\ds\Big|+\sum_{m=1}^{j}\int_{x_{m-1}}^{x_{m+1}}\abs{\big(H(x,s)-\i\sin(m t_h^-) e^{\i j t_h^-} \big)f\phi_m}\ds\\
    &\quad +\sum_{m=j+1}^n\int_{x_{m-1}}^{x_{m+1}}\abs{\big(H(x,s)-\cos(j t_h^-)  e^{\i m t_h^-} \big)f\phi_m}\ds
          +t\norm{f}\\
    &\quad +\sum_{m=1}^{n}\int_{x_{m-1}}^{x_{m+1}}\abs{\eta_4}^{-\abs{j-m}}\abs{f}\ds\\
    &\lesssim \int_{0}^{x_1}|f|\ds+\sum_{m=1}^{j-2}\int_{x_{m-1}}^{x_{m+1}}\abs{\big(\i\sin{ks}\,e^{\i kx}-\i\sin(m t_h^-) e^{\i j t_h^-} \big)f\phi_m}\ds  \\
    &\quad+\int_{x_{j-2}}^{x_{j+1}}|f|\ds+\sum_{m=j+1}^n\int_{x_{m-1}}^{x_{m+1}}\abs{\big(\cos{kx}\,e^{\i ks}-\cos(j t_h^-)  e^{\i m t_h^-} \big)f\phi_m}\ds\\
    &\quad + t\norm{f}+\int_{x_{j_1}}^{x_{j_2}}\abs{f}\ds
    \db\\
     &\lesssim \sum_{m=1}^{n}\big((m+j)\abs{t_h^--t}+ t\big)(\abs{ f},\phi_m)\\
     &\quad+h^{\frac12}\norml{f}{[x_0, x_1]\cup [x_{j-2}, x_{j+1}]}+(q_0h)^{\frac12}\norml{f}{[x_{j_1}, x_{j_2}]}+ t\norm{f},
\end{align*}
where $j_1=\max \{j-q_0-1,0\}$, $j_2=\min \{j+q_0+1,n\}$ and we have used the Lagrange Mean Value Theorem to derive the last inequality.
Noting that $(m+j)\abs{t_h^- -t}=(m+j)h\abs{k_h^--k}\le 2\abs{k_h^- -k}$, the above inequality yields
\begin{align*}
    \abs{u'(x)-u_h'(x)}\lesssim &(t+\abs{k_h^--k})\norm{f}+h^{\frac12}\norml{f}{[x_0, x_1]}+h^{\frac12}\norml{f}{[x_{j-2}, x_{j+1}]}\\
    &+(q_0h)^{\frac12}\norml{f}{[x_{j_1}, x_{j_2}]},\qquad
    \forall x\in K_j, \  j=1,\cdots,n.
\end{align*}
As direct consequences of the above inequality, we have
\begin{align}\label{eterr2}
    \norm{(u-u_h)'}_{L^2(\Om)}^2&\lesssim
    \big(t+\abs{k_h^- -k} \big)^2\norm{f}^2+q_0^2h^2\norm{f}^2+h\norm{f}^2\\
    &\lesssim \big(t+\abs{k_h^- -k} \big)^2\norm{f}^2,\nn
\end{align}
where we have used $q_0h\lesssim t$ (cf. \eqref{aaa1}) and $h\lesssim t^2$ (since $k^2h\gtrsim 1$) to derive the last inequality.
\begin{align*}
\abs{[(u-u_h)']_j}&=\abs{(u'(x_j+)-u_h'(x_j+))-(u'(x_j-)-u_h'(x_j-))}\\
&\leq \abs{u'(x_j+)-u_h'(x_j+)}+\abs{u'(x_j-)-u_h'(x_j-)}\\
&\lesssim
(t+\abs{k_h^- -k})\norm{f}+h^{\frac12}\norml{f}{[x_0, x_1]}+h^{\frac12}\norml{f}{[x_{j-3}, x_{j+2}]}\\
    &\quad+(q_0h)^{\frac12}\norml{f}{[x_{(j-1)_1}, x_{(j+1)_2}]}.
\end{align*}
Since $|\ga|\leq 1/6$,
\begin{align}
\sum_{j=1}^{n-1} |\gamma| h\abs{[(u-u_h)']_j}^2&\lesssim
(t+\abs{k_h^- -k})^2\norm{f}^2+q_0^2h^2\norm{f}^2+h\norm{f}^2\\
    &\lesssim \big(t+\abs{k_h^- -k} \big)^2\norm{f}^2,\nn
\end{align}
which together with \eqref{eterr2} implies \eqref{error1}.
By using Lemma~\ref{lkh3}, we can complete the proof of the theorem.
\end{proof}
\begin{myrem} (a) This theorem shows that the pollution error in $H^1$-norm is controlled by the phase difference $k-k_h^-$.
 Ihlenburg and Babu\v{s}ka \cite{F.I.I,F.I.I.B} obtained the same result for the FEM in the one dimensional case. Recently, the authors \cite{H.Wu, Z.W} showed  for the CIP-FEM and FEM in higher dimensions that the pollution errors in $H^1$-norm are of the same order as the phase difference obtained by dispersion analyses.

(b) The pollution effect of the CIP-FEM in one dimension can be eliminated by chosen appropriately the penalty parameters (cf. \eqref{error3}). It is well-known that, the pollution effect exists in the FEM while in one dimension, it can be eliminated by a
suitable modification of the discrete system but using the same stencil (cf. \cite{bs00}). Note that the stencil of the CIP-FEM ($\ga\neq 0$) is different from that of the FEM. We refer to \cite{dpg11} for similar results on discontinuous Petrov-Galerkin methods.

\end{myrem}
\section{Numerical Evaluation} Throughout this section, we
consider the BVP with constant right hand side $f(x)\equiv -1$.

\subsection{The discrete wavenumber}
Unlike the best approximation, the CIP-FEM solution is, in general,
not in  phase with the exact solution. Numerical tests show that the
discrete solution has a phase delay with respect to the exact
solution when $-\frac{1}{6}\leq \ga<\ga_o$ and has a phase lead with
respect to the exact solution when $\ga_o<\ga \leq \frac{1}{6}$
which is similar to the FEM solution~\cite{F.I.I}. Hence we can
choose an appropriate value of the stabilization parameter to
eliminate the phase error. ``Optimal" values of $\ga$ are those in a
neighbourhood of $\ga_o$. This is shown in Figure~\ref{nophaseerr},
where the real and the imaginary parts of both solutions are plotted
for $k=10$, $kh=1$. There is no phase error for the CIP-FEM solution.
\begin{figure}[!htb]
\begin{center}
\includegraphics[scale=0.6]{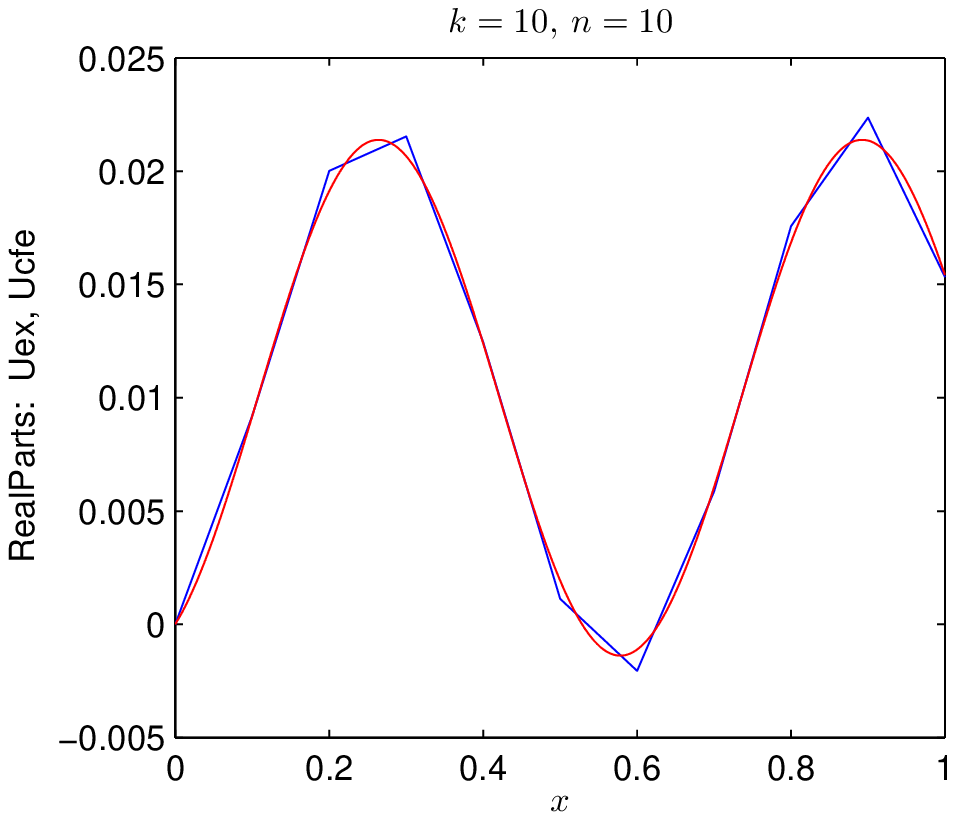}
\includegraphics[scale=0.6]{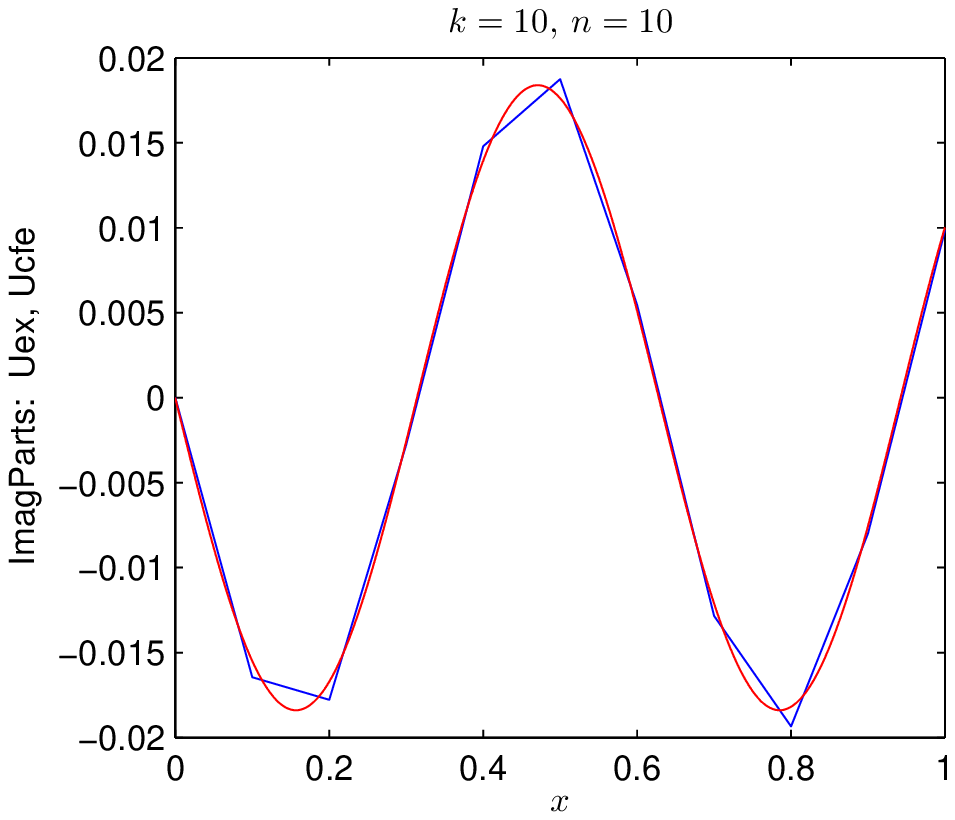}
\end{center}
\caption{\small{No phase error of the CIP-FEM solution with $\ga=\ga_o$ for
$k=10,n=10.$}}\label{nophaseerr}
\end{figure}

On a uniform mesh, the numerical dispersion relation of CIP method is
\begin{equation}\label{aa6}
\cos t_h^{-}(\ga)=\frac{4\gamma+1+\frac{t^2}{6}-
\sqrt{\left(1+\frac{t^2}{6}\right)^2+4\gamma t^2} }{4 \gamma },
\end{equation}
where $t=kh$. For fixed $\ga$, the right-hand is a  function of $t$, and is
used for computation of the discrete wavenumber that governs the periodicity of the CIP-FEM solution. In Figure~\ref{penparam}, the functions $y_1=\cos t=\cos t^-_h(\ga_o)$, $y_2=\cos t^-_h(-1/12)$, $y_3=\cos t^-_h(0)$ and $|y_4|=1$ are plotted. At $t_c=\sqrt{48 \ga +12}$, the functions $y_i$ ($i=2,3$) reach absolute value 1; the numerical solution switches from the propagating case to the decaying case. The value $t_c$ corresponds to a \textit{cutoff frequency} for the numerical solution \cite{L.L.T}.

For fixed $k$, the convergence $k_h^-(:=t_h^-/h) \rightarrow k$ is
visualized by $\cos t_h^- \rightarrow \cos t=\cos kh$ as $h\rightarrow
0$. The curves begin to deviate significantly at about $kh=t=1$.
\begin{figure}[!htb]
\begin{center}
\includegraphics[scale=0.6]{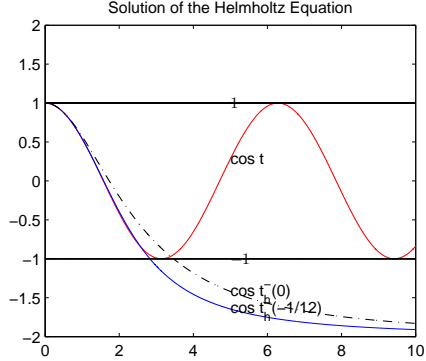}
\end{center}
\caption{\small{Convergence of discrete to exact wavenumber via comparison of $\cos t_h^{-}(\ga)$ for $\ga=\ga_o, -1/12, 0$ to $\cos t$}. The cutoff frequency $t_c=\sqrt{8}$ for $\ga=-1/12$, $t_c=\sqrt{12}$ for $\ga=0$.}\label{penparam}
\end{figure}
\subsection{Error of the best approximation and CIP-FEM solution} Consider in Figure~\ref{CIPopt} log-log-plots of the relative error $e_{ba}:= |u - u_I|_1/|u|_1$ of the best approximation and the relative error $e_c:=|u-u_h|_1/|u|_1$ by choosing $\ga=\ga_o$ for different $k$.
Note that the errors first stay at $100\%$ on coarse mesh, then
start to decrease at a certain meshsize, and then decrease with
constant slope of $-1$ (in log-log scale). This illustrates that the
CIP-FEM solution is convergent to the best approximation and there is no
pollution error for the solution. We are interested in the critical
number of DOF where the relative error begins to decrease (see for
instance \cite{F.I.I}). We can see from Figure \ref{CIPopt} that the
critical numbers of DOF for both the best approximation  and  the
CIP-FEM solution with $\ga=\ga_o$ are about $N=[\frac{k}{\pi}]$.
\begin{figure}[!htb]
\begin{center}
\includegraphics[scale=0.6]{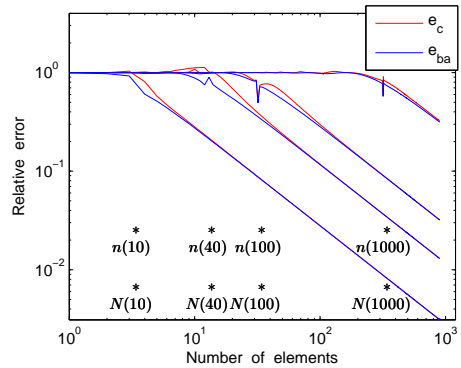}
\end{center}
\caption{\small{The relative error of the best approximation and CIP-FEM
solution with $\ga=\ga_o$ in $H^1$-seminorm and predicted critical
numbers of DOF for $k = 10$, $k = 40$, $k = 100$ and
$k=1000$.}}\label{CIPopt}
\end{figure}

For general $\ga$, the critical number of DOF $N_c$ can be predicted
using the  methods of \cite{F.I.I}:
$$ |k_h^{-}-k|\leq \frac{\pi}{3}\approx 1.$$
If $\ga$ does not depend on $t$, $N_c$ follows from the Taylor expansion equation \eqref{aa6}:
$$ N_c=\Big(\frac{|12\ga+1|}{24}k^3\Big)^{\frac{1}{2}}\ \ \ (\ga\neq -\frac{1}{12}),\ \ \ \ \ N_c=\Big(\frac{k^5}{720}\Big)^{\frac{1}{4}}\ \ \ (\ga=-\frac{1}{12}).$$
The formula of the critical number of DOF for CIP-FEM solution is
similar to FEM solution when $\ga\neq -1/12$, we consider the
$\ga=-1/12$ case in Figure \ref{relerrCIP}. It shows that the
predicted critical number of DOF is very good, especially for large
$k$.
\begin{figure}[!htb]
\begin{center}
\includegraphics[scale=0.6]{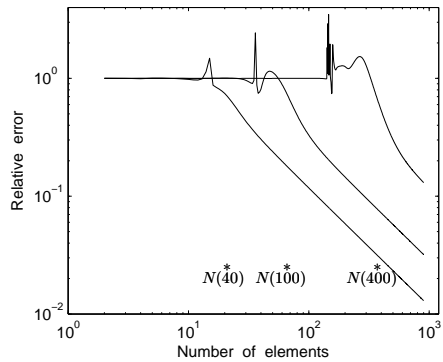}
\end{center}
\caption{\small{The relative error of the CIP-FEM solution with
$\ga=-1/12$ in $H^1$-seminorm and predicted critical numbers of DOF
for $k=40,100,400$.}}\label{relerrCIP}
\end{figure}

Figure~\ref{reerrCIP2} illustrates the relative error of the CIP-FEM solution for general
$\ga$ other than $\ga_o$ and $-1/12$, say, $\ga=-0.08$ and $\ga=-0.1 \i$, for $k$ from 1 to 1000 on meshes determined by
$k^3h^2=1$. It is shown that the relative error can be controlled.
For small $k$ ($1\leq k\leq 50$), the relative error decreases
rapidly with $k$, for large $k$ ($k\geq100$), the relative error is
dominated by the term $k^3h^2$. It verifies the estimates given
by \eqref{error1} in Theorem~\ref{terr3} and Theorem~\ref{imge th}.
The pollution effect does exist for the two choices of $\ga$.

\begin{figure}[!htb]
\begin{center}
\includegraphics[scale=0.6]{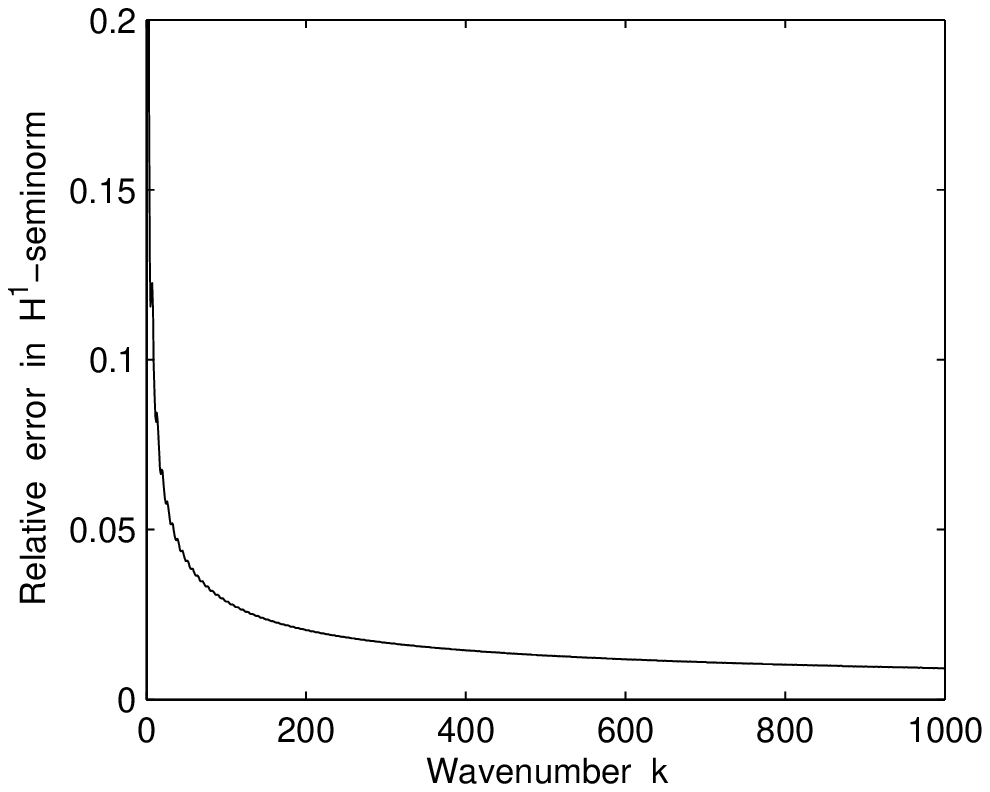}
\includegraphics[scale=0.6]{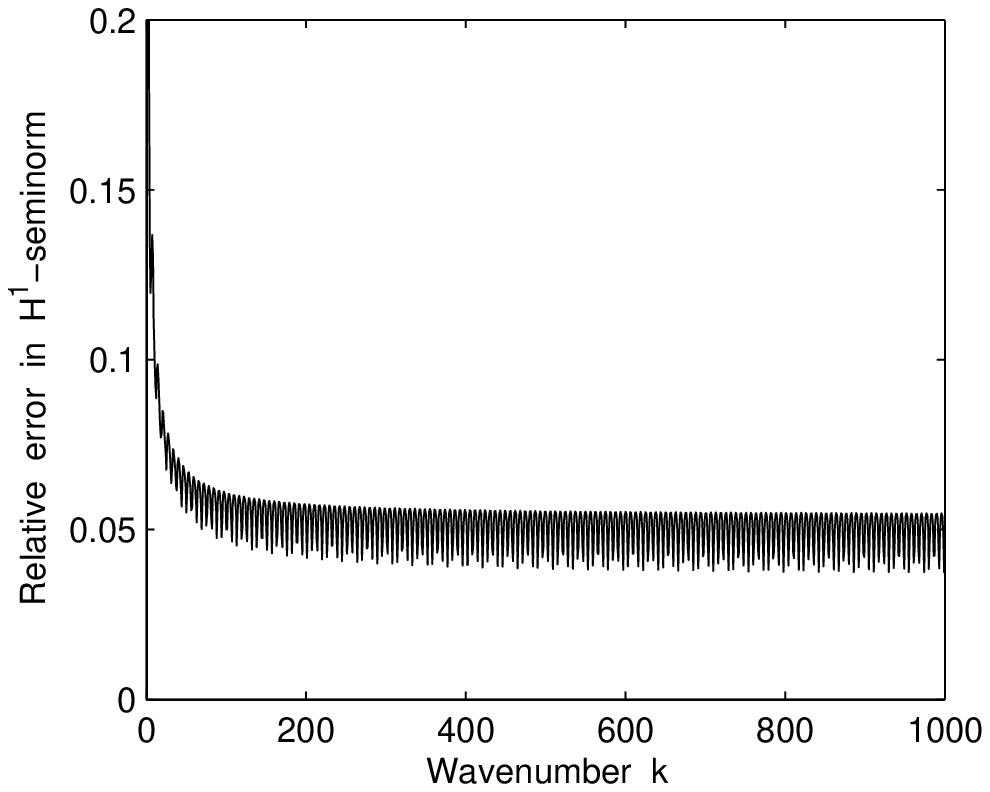}
\end{center}
\caption{\small{The relative error of the CIP-FEM solution with $\ga=-0.08$ (left) and $\ga= -0.1 \i$ (right) in
$H^1$-seminorm with constraint $k^3h^2=1$ for $k$ from 1 to 1000.}}\label{reerrCIP2}
\end{figure}
In Figure~\ref{relerr12}, the ratio $e_{c}/e_{ba}$ computed with the restriction $kh=1$, is plotted for $k$ from 1 to 1000. Obviously, the ratio (in the left of Figure~\ref{relerr12}) is increasing with $k$ on the line. We remark that the ratio line in the right of Figure~\ref{relerr12} is increasing with $k$ and converges to a constant. This is due to that the relative error of the CIP-FEM solution with $\ga$ (a pure imaginary number with negative imaginary part) is bounded at any range by the magnitudes of $\min \{1, k^3h^2\}$ and $kh$ (cf. Theorem~\ref{imge th}). For large $k$ ($k\geq100$), the ratio $e_{c}/e_{ba}\lesssim 1+\min\{1,k^3h^2\}/{kh}=1+\min\{1,k\}$ (for $kh=1$), i.e., the ratio $e_{c}/e_{ba}\leq C$. This shows that the imaginary part of the stabilization gives control of the amplitude of the wave.
\begin{figure}[!htb]
\begin{center}
\includegraphics[scale=0.6]{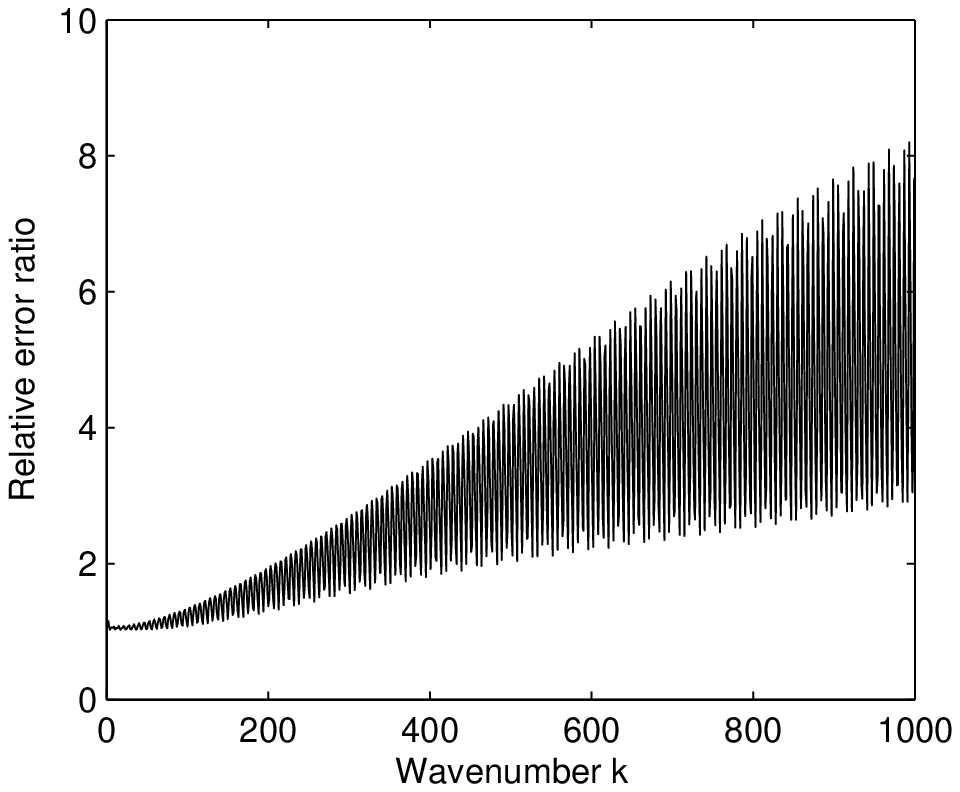}
\includegraphics[scale=0.6]{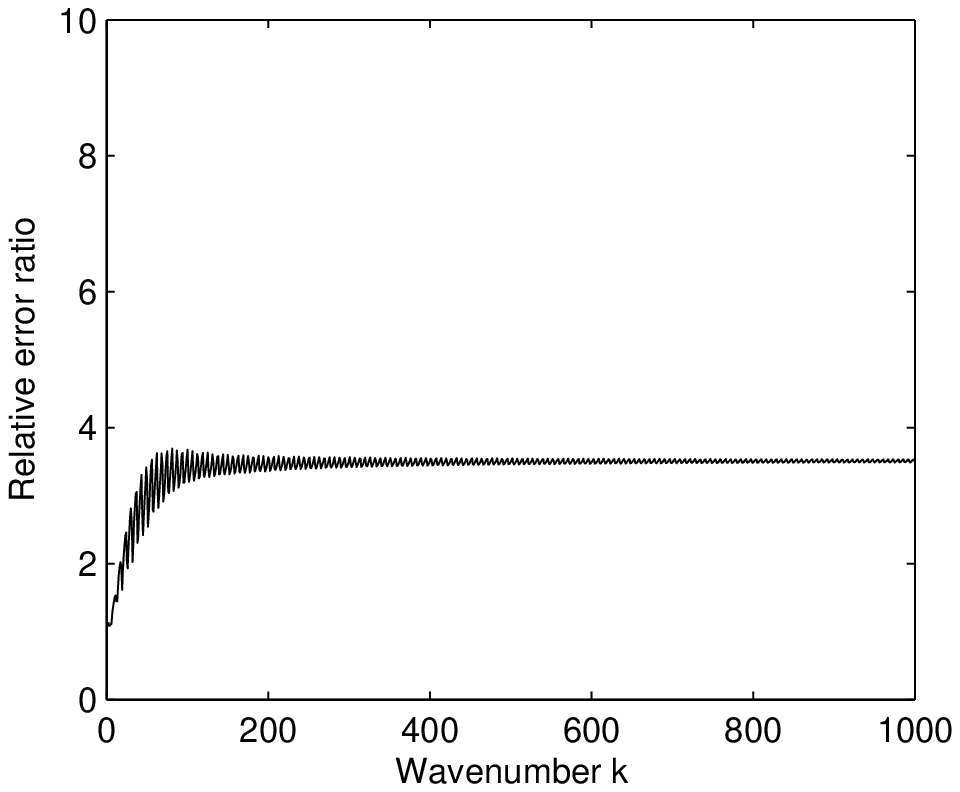}
\end{center}
\caption{\small{The relative error ratio $e_{c}/e_{ba}$ of the CIP-FEM
solution with $\ga=-0.08$ (left) and $\ga=-0.1 \i$ (right) to the
minimal error $H^1$-seminorm with constraint
$kh=1$.}}\label{relerr12}
\end{figure}

\subsection{Eliminate the pollution error} From Figure~\ref{reerrCIP2} and Figure~\ref{relerr12}, we know that the
pollution error is present for general $\ga$, but Figure
\ref{relerropt} shows that the relative error ratio is controlled by
the magnitude $kh$ when we choose an appropriate parameter, say
$\ga=\ga_o$, for $n=k$ up to 1000. The line does neither increase
nor decrease significantly with the change of $k$.
\begin{figure}[!htb]
\begin{center}
\includegraphics[scale=0.6]{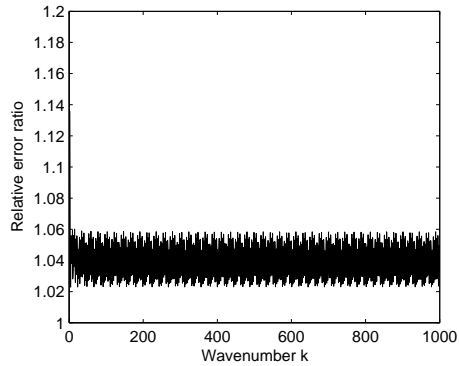}
\end{center}
\caption{\small{The relative error ratio $e_{c}/e_{ba}$ of the CIP-FEM solution with $\ga=\ga_o$ to the minimal error $H^1$-seminorm with constraint $kh=1$.}}\label{relerropt}
\end{figure}

\section{Conclusion} This paper provides some work for analyzing the dispersion and error of CIP method. We have show the following:
\begin{enumerate}
\item The CIP method guarantees existence and amplitude control for
  properly chosen sign of the imaginary part of the stabilization operator.
\item There is numerical pollution for general $\ga$ and the error is mainly influenced by the pollution term for large $k$.
\item There are many possible ``good" choices of parameters to
  eliminate the pollution term. Indeed, provided $kh\leq 1$ the stabilization parameter may
  be chosen in an $O(h)$ interval of the ideal value $\ga_o$.
\end{enumerate}
Future work will address the questions to what extent these results
can be made to carry over to the multidimensional case and to higher
polynomial orders.


\end{document}